

\newif\ifpictures
\picturestrue

\documentclass[12pt]{amsart}
\usepackage{amssymb}
\usepackage{amsmath}
\usepackage[dvips]{graphicx}
\usepackage{book tabs}

\usepackage{dsfont}

\usepackage{float}

\usepackage[color]{changebar}
\cbcolor{blue}

\usepackage[color=green!05!red!05!blue!10]{todonotes}
\presetkeys{todonotes}{inline}{}

\ifpictures
\usepackage{psfrag}
\fi

\headheight=8pt
\topmargin=30pt 
\textheight=611pt     \textwidth=456pt
\oddsidemargin=6pt   \evensidemargin=6pt


\numberwithin{equation}{section}
\newtheorem{thm}{Theorem}
\newtheorem{prop}[thm]{Proposition}
\newtheorem{lemma}[thm]{Lemma}
\newtheorem{cor}[thm]{Corollary}

\theoremstyle{definition}

\newtheorem{example}[thm]{Example}
\newtheorem{remark1}[thm]{Remark}
\newtheorem{openproblem1}[thm]{Open problem}

\newtheorem*{ack}{Acknowledgement}

\newenvironment{rem}{\begin{remark1}\rm}{\end{remark1}}

\newenvironment{claim}{\normalfont{\bf Claim.}}{\normalfont}

\numberwithin{thm}{section}

\newcounter{FNC}[page]
\def\newfootnote#1{{\addtocounter{FNC}{2}$^\fnsymbol{FNC}$%
     \let\thefootnote\relax\footnotetext{$^\fnsymbol{FNC}$#1}}}

\newcommand{\N}{\mathbb{N}}
\newcommand{\Q}{\mathbb{Q}}
\newcommand{\R}{\mathbb{R}}

\newcommand{\VV}{\mathcal{V}}
\newcommand{\HH}{\mathcal{H}}
\newcommand{\Scal}{\mathcal{S}}

\DeclareMathOperator{\conv}{conv}


\DeclareMathOperator{\linspan}{span}
\DeclareMathOperator{\diag}{diag}
\DeclareMathOperator{\rank}{rank}

\usepackage{xcolor}


\title{Containment problems for polytopes and spectrahedra}

\author{Kai Kellner}

\author{Thorsten Theobald}

\author{Christian Trabandt}

\address{Goethe-Universit\"at, FB 12 -- Institut f\"ur Mathematik,
Postfach 11 19 32, D--60054 Frankfurt am Main, Germany}

\email{\{kellner,theobald,trabandt\}@math.uni-frankfurt.de}


\begin{document}

\begin{abstract}
We study the computational question whether a given polytope or spectrahedron
$S_A$ (as given by the positive semidefiniteness region of a linear matrix
pencil $A(x)$) is contained in another one $S_B$.
 
First we classify the computational complexity, extending results on the
polytope/poly\-tope-case by Gritzmann and Klee to the
polytope/spectrahedron-case. 
For various restricted containment problems, NP-hardness is shown.

We then study in detail semidefinite conditions to certify containment, building
upon work by Ben-Tal, Nemirovski and Helton, Klep, McCullough.
In particular, we discuss variations of a sufficient semidefinite condition to
certify containment of a spectrahedron in a spectrahedron.
It is shown that these sufficient conditions even provide exact semidefinite
characterizations for containment in several important cases, including
containment of a spectrahedron in a polyhedron. 
Moreover, in the case of bounded $S_A$ the criteria will always succeed in
certifying containment of some scaled spectrahedron $\nu S_A$ in $S_B$.
\end{abstract}

\maketitle

\section{Introduction}

Denote by $\mathcal{S}_k$ the set of all real symmetric $k \times k$-matrices
and by $\mathcal{S}_k[x]$ the set of symmetric $k \times k$-matrices with
polynomial entries in $x=(x_1, \ldots,x_n)$.
For $A_0, \ldots, A_n \in \mathcal{S}_k$, let $A(x)$ denote the \emph{linear
(matrix) pencil} 
$A(x) \ = \ A_0 + x_1 A_1 + \cdots + x_n A_n \in \mathcal{S}_k[x]$.
Then the set
\begin{equation}
\label{eq:spectrahedron1}
  S_A \ := \ \{x \in \R^n \, : \, A(x) \succeq 0 \}
\end{equation}
is called a \emph{spectrahedron}, where $A(x) \succeq 0$ denotes positive
semidefiniteness of the matrix $A(x)$.

Spectrahedra arise as feasible sets of semidefinite programming~(see
\cite{Ramana1995,Pataki2000}).
In the last years, there has been strong interest in understanding the geometry
of spectrahedra (see, e.g.,
\cite{barvinok-2012,gouveia-netzer-2011,helton-nie-2010}), particularly driven
by their intrinsic relevance in polynomial optimization
\cite{blekherman-2012,gouveia2010} and convex algebraic geometry
\cite{helton-nie-2012, helton-vinnikov-2007}.
Spectrahedra naturally generalize the class of polyhedra, see
\cite{Bhardwaj2011,Ramana1998} for particular connections between these two
classes.

In this paper, we study containment problems for polyhedra and spectrahedra.
Since polyhedra are special cases of spectrahedra, we can use the following
general setup: Given two linear pencils $A(x) \in \mathcal{S}_k[x]$ and 
$B(x) \in \mathcal{S}_l[x]$, is $S_A \subseteq S_B$?

For polytopes (i.e., bounded polyhedra), the computational geometry and
computational complexity of containment problems have been studied in detail.
See in particular the classifications by Gritzmann and Klee
\cite{gritzmann-klee-92,gritzmann-klee-93,gritzmann-klee-containment-survey}.
Notably, it is well-known that the computational complexity of deciding
containment problems strongly depends on the type of the input. 
For instance, if both polytopes are given by their vertices ($\VV$-polytopes),
or both polytopes are given as an intersection of halfspaces ($\HH$-polytopes), 
containment can be decided in polynomial time, while it is co-NP-hard to decide
whether an $\HH$-polytope is contained in a $\VV$-polytope (see
\cite{freund-orlin-85,gritzmann-klee-containment-survey}).

For spectrahedra, much less is known. 
Ben-Tal and Nemirovski studied the matrix cube problem
\cite{ben-tal-nemirovski-2002}, which corresponds to the containment problem
where $S_A$ is a cube.
In a much more general setting, Helton, Klep, and McCullough \cite{Helton2010}
studied containment problems of matricial positivity domains (which live in a
union of spaces of different dimensions). As a byproduct, they also derive some
implications for containment of spectrahedra.

In the current paper, we study containment problems of polytopes, polyhedra and
spectrahedra from a computational viewpoint. 
In Section~\ref{se:complexity}, we extend existing complexity classifications
for the polyhedral situation to the situation where polytopes and spectrahedra
are involved. 
In particular, the containment question of a $\VV$-polytope in a spectrahedron
can be decided in polynomial time, and the question whether a spectrahedron is
contained in an $\HH$-polytope can be formulated by the complement of
semidefinite feasibility problems (involving also strict inequalities). Roughly
speaking, the other cases are co-NP-hard. This includes the containment problem
of an $\HH$-polytope in a spectrahedron, already when the spectrahedron is a
ball. The complete classification is stated in 
Theorems~\ref{thm:complexityVinS}--\ref{thm:complexity3}.

To overcome the situation that the general containment problem for spectrahedra
is co-NP-hard, relaxation techniques are of particular interest.
Our point of departure in Section~\ref{sec:criterion} is the relaxation from
\cite{Helton2010} which provides a distinguished sufficient criterion for
containment of a spectrahedron $S_A$ in a spectrahedron $S_B$
(see~\eqref{eq:inclusion}).
We provide an elementary derivation of this semidefinite relaxation (as opposed
to the operator-theoretic techniques used there) and study the quality of the
criterion.
This leads to a new and systematic access to studying containment problems
of polyhedra and spectrahedra and provides several new (and partially
unexpected) results.

In particular, we obtain the following new results:

1. We discuss variations of containment criterion~\eqref{eq:inclusion}, which
   lead to improved numerical results, see Theorem~\ref{thm:Helton_Contain}, 
   Corollaries~\ref{cor:contain-positive} and \ref{cor:contain_symmetry} and
   Section~\ref{sec:non-exact}.

2. We exhibit several cases when the criteria are exact (see
   Theorem~\ref{thm:summaryexactness}).
   For some of the cases we can provide elementary proofs.
   The main case in Theorem~\ref{thm:summaryexactness}
   states that the sufficient criteria for the containment of spectrahedra
   in polyhedra (in normal form) are \emph{exact} characterizations. The proof
   of the statements is given in
   Section~\ref{sec:spectrahedron-polytope-inclusion}, by developing
   various properties of the containment criteria (transitivity, block
   diagonalization) and combining them with duality theory of semidefinite
   programming. The exactness of the spectrahedron-polyhedron-case 
   is particularly surprising, since a priori the criteria depend on the linear
   pencil representation of the spectrahedron.

3. In Section~\ref{sec:non-exact}, we extend the results from \cite{Helton2010}
   on cases, where the criteria are not exact. For a counterexample in 
   \cite{Helton2010} we exhibit the phenomenon that the containment criteria
   will at least succeed in certifying that a scaled version of the
   spectrahedron $S_A$ is contained in $S_B$. 

4. In Proposition~\ref{prop:scaling}, we show that in the case of bounded $S_A$
   there \emph{always} exists a scaling factor $\nu > 0$ such that for the
   scaled spectrahedron pair $(\nu S_A, S_B$) the criteria~\eqref{eq:inclusion} 
   and~\eqref{eq:inclusion_relaxed} hold.

We will close the paper by explaining some implications of the scaling result
on the optimization version of containment problems (as also relevant for the
computation of geometric radii of convex bodies, e.g., in 
\cite{gritzmann-klee-92,gritzmann-klee-93}).

\section{Preliminaries\label{se:prelim}}

Throughout the paper we work in $n$-dimensional Euclidean space $\R^n$, and
$\|\cdot\|$ denotes the Euclidean norm.

\subsection*{Matrices and block matrices}
For a matrix $A$, the $(i,j)$-th entry of $A$ is labeled by $a_{ij}$ as usual. 
For a block matrix $B$, we label the $(i,j)$-th block by $B_{ij}$ and the
$(s,t)$-th entry of $B_{ij}$ by $(B_{ij})_{st}$.

A square matrix with $1$ in the entry $(i,j)$ and zeros otherwise is denoted by
$E_{ij}$. The $n \times n$ identity matrix is denoted by $I_n$.

The \emph{Kronecker product} $A\otimes B$ of square matrices $A$ of size 
$k \times k$ and $B$ of size $l \times l$ is the $kl \times kl$ matrix
\begin{equation}
  A\otimes B =
  \begin{bmatrix} a_{11}\, B & \ldots & a_{1k}\, B \\
  \vdots & \ddots & \vdots \\
  a_{k1}\, B & \ldots & a_{kk}\, B
  \end{bmatrix}
\label{eq:kronecker}
\end{equation}
(see, e.g., \cite{de-klerk-book,Horn1994}).
It is well-known (see, e.g., \cite[Cor.\ 4.2.13]{Horn1994}) that the Kronecker
product of two positive semidefinite matrices is again positive semidefinite.

\subsection*{Polyhedra and polytopes}

A \emph{polyhedron} is the intersection of finitely many halfspaces. A bounded
polyhedron or, equivalently, the convex hull of finitely many points in $\R^n$
is called \emph{polytope}.

For algorithmic questions in $n$-dimensional space it is crucial whether a
polytope is given in the first way ($\mathcal{H}$-\emph{polytope}) or in the
second way ($\mathcal{V}$-\emph{polytope}).
Our model of computation is the binary Turing machine:
polytopes are presented by certain rational numbers, and the size of the input
is defined as the length of the binary encoding of the input data (see, e.g.,
\cite{gritzmann-klee-92}).
A $\mathcal{V}$-polytope $P$ is given by a tuple 
$(n; m; v^{(1)}, \ldots, v^{(m)})$ with $n,m \in \N$, and 
$v^{(1)}, \ldots, v^{(m)} \in \Q^n$ such that
$P = \mbox{conv}\{v^{(1)}, \ldots, v^{(m)}\}$.
An $\mathcal{H}$-polytope $P$ is given by a tuple $(n; m; A; b)$ with
$n,m \in \N$, a rational $m \times n$-matrix $A$, and $b \in \Q^m$ such that 
$P = \{x \in \R^n \, : \, b+Ax \ge 0\}$ is bounded. If the $i$-th
row $(b+Ax)_i \ge 0$ defines a facet of $P$, then the $i$-th row of $A$ is
an \emph{inner} normal vector of this facet.

For fixed dimension, $\mathcal{H}$- and $\mathcal{V}$-presentations of a
rational polytope can be converted into each other in polynomial time.
In general dimension (i.e., if the dimension is not fixed but part of the input)
the size of one presentation can be exponential in the size of the other
\cite{mcmullen-70}.

\subsection*{Spectrahedra}

Given a linear pencil 
\begin{equation}
\label{eq:linearpencil}
A(x) \ = \ A_0 + \sum_{p=1}^n x_p A_p \in \mathcal{S}_k[x] \quad
\text{ with } A_p = (a^p_{ij}) \, , \quad 0 \le p \le n \, ,
\end{equation}
the spectrahedron $S_A = \{x \in \R^n \, : \, A(x) \succeq 0\}$ contains the
origin in its interior if and only if there is another linear pencil $A'(x)$
with the same positivity domain such that $A'_0 = I$, see
\cite{Ramana1995,helton-vinnikov-2007}. 
In particular, then $S_A$ is full-dimensional.
To simplify notation, we sometimes assume that $A(x)$
is a \textit{monic} linear pencil, i.e. $A_0 = I_k$.
As a shorthand we use $A\succeq B$ to state that $A-B$ is positive semidefinite.

Note that every polyhedron $P = \{x \in \R^n \, : \, b + Ax \ge 0 \}$ has a 
natural representation as a spectrahedron:
\begin{equation}
  P = P_A = \left\{ x \in \R^n \ : \  A(x) = \begin{bmatrix} 
			  a_1(x)   & 0 & 0 \\ 
  			  0& \ddots & 0  \\
			  0 & 0  & a_k(x)\end{bmatrix}
			  \succeq 0 \right\},
\label{eq:polytope}
\end{equation}
where $a_i(x)$ abbreviates the $i$-th entry of the vector $b+Ax$. 
$P_A$ contains the origin if and only if the inequalities can be scaled so that
$b = \mathds{1}_k$, where $\mathds{1}_k$ denotes the all-ones vector in $\R^k$.
Hence, in this case, $A(x)$ is monic, and it is called the
\emph{normal form} of the polyhedron $P_A$.

A centrally-symmetric ellipsoid with axis-aligned semi-axes of lengths 
$a_1,\ldots, a_n$ can be written as the spectrahedron $S_A$ of the monic linear
pencil
\begin{equation}
  \label{eq:ellipsoid}
  A(x) \ = \ I_{n+1}
       + \sum_{p=1}^{n} \frac{x_p}{a_p}(E_{p,n+1}+E_{n+1,p}).
\end{equation}
We call~\eqref{eq:ellipsoid} the \emph{normal form of the ellipsoid}.
Specifically, for the case of equal semi-axis lengths $r := a_1 = \cdots = a_n$
this gives the \emph{normal form of a ball} with radius $r$.

For algorithmic questions, a linear pencil is given by a tuple 
$(n;k;A_0, \ldots, A_n)$ with $n,k\in \N$ and $A_0, \ldots, A_n$ rational
symmetric matrices.

\section{Complexity of containment problems for 
spectrahedra\label{se:complexity}}

In this section, we classify the complexity of several natural containment
problems for spectrahedra.
For polytopes the computational complexity of containment problems strongly
depends on the type of input representations. For $\mathcal{V}$- and
$\mathcal{H}$-presented polytopes, the following result is well-known (see
\cite{freund-orlin-85,gritzmann-klee-containment-survey}).

\begin{prop} \label{prop:complexity1}
Deciding whether a polytope $P$ is contained in a polytope $Q$ can be done in
polynomial time for the following cases:
\begin{enumerate}
\item Both $P$ and $Q$ are $\mathcal{H}$-polytopes,
\item both $P$ and $Q$ are $\mathcal{V}$-polytopes, or
\item $P$ is a $\mathcal{V}$-polytope while $Q$ is an $\mathcal{H}$-polytope.
\end{enumerate}
However, deciding whether an $\HH$-polytope is contained in a
$\mathcal{V}$-polytope is co-NP-complete. This hardness persists
if $P$ is restricted to be a standard cube and $Q$ is restricted
to be the affine image of a cross polytope.
\end{prop}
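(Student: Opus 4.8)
The plan is to handle the three polynomial-time cases first, then the hardness.

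For cases (1) and (3), containment reduces directly to linear programming or even to checking finitely many inequalities. If $Q$ is an $\HH$-polytope, say $Q = \{x : c_j + \langle d_j, x\rangle \ge 0,\ j=1,\dots,s\}$, then $P \subseteq Q$ holds if and only if each of the $s$ defining inequalities of $Q$ is valid on $P$. When $P$ is a $\VV$-polytope $\conv\{v^{(1)},\dots,v^{(m)}\}$ (case (3)), validity of each inequality is checked simply by evaluating it at every vertex $v^{(i)}$ — a total of $sm$ arithmetic operations, hence polynomial. When $P$ is also an $\HH$-polytope (case (1)), validity of a single inequality $c_j + \langle d_j, x\rangle \ge 0$ over $P$ is decided by solving the linear program $\min\{ c_j + \langle d_j, x\rangle : x \in P\}$ and testing whether the optimum is nonnegative; solving $s$ such LPs is polynomial by the known polynomiality of linear programming over rationals. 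For case (2), where both $P$ and $Q$ are $\VV$-polytopes, $P \subseteq Q$ is equivalent to every vertex $v^{(i)}$ of $P$ lying in $Q$; membership of a point in a $\VV$-polytope $Q = \conv\{w^{(1)},\dots,w^{(t)}\}$ is the feasibility question of whether $v^{(i)}$ is a convex combination of the $w^{(j)}$, which is a single linear feasibility program and thus polynomial. I would record all four reductions explicitly and cite the polynomiality of LP.

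For the hardness statement, the plan is to reduce a known co-NP-hard (or its complement, NP-hard) problem to the $\HH$-in-$\VV$ containment problem, following the approach of Freund--Orlin and Gritzmann--Klee. The natural source is a partition- or subset-sum-type problem, or equivalently the hardness of deciding whether the standard cube $[-1,1]^n$ is contained in a given centrally symmetric polytope presented by its vertices (the affine image of a cross polytope). Concretely, I would exhibit the cross polytope's affine image $Q$ as the polar-type object whose containment of the cube encodes a norm-maximization / feasibility condition, and show that deciding $[-1,1]^n \subseteq Q$ is equivalent to certifying that a certain linear form has bounded value over $\{\pm 1\}^n$ — a combinatorial optimization over the vertices of the cube that captures an NP-hard problem. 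Membership in co-NP is clear: a \emph{non}-containment witness is a single vertex $x \in \{\pm1\}^n$ of the cube violating one of the (exponentially many, but individually checkable) facet inequalities of $Q$, which is a short, polynomially verifiable certificate.

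The main obstacle will be the hardness direction, specifically constructing the affine image of the cross polytope so that its facet structure precisely encodes the NP-hard instance while keeping the cube as the containee. The delicate point is that the cross polytope has only $2n$ vertices but $2^n$ facets, and it is exactly this asymmetry — polynomially many vertices versus exponentially many facet inequalities — that makes $\HH$-in-$\VV$ testing hard while the other three cases stay easy; getting the reduction's encoding to be polynomial-size and faithful is where the real work lies. Rather than reprove this from scratch, I would invoke the constructions of \cite{freund-orlin-85,gritzmann-klee-containment-survey} and merely verify that their instances satisfy the claimed restricted form (standard cube for $P$, affine image of a cross polytope for $Q$).
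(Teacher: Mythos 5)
The paper offers no proof of this proposition at all: it is stated as well known and attributed to \cite{freund-orlin-85,gritzmann-klee-containment-survey}, so the comparison is really with that citation. Your plan is correct and actually supplies more than the paper does. The three tractable cases are exactly the standard LP reductions (validity of each inequality of $Q$ over $P$ via one LP per inequality when $P$ is an $\HH$-polytope; evaluation of each inequality at each vertex when $P$ is a $\VV$-polytope; and vertex-membership in a $\VV$-polytope as an LP feasibility problem for case (2)), and for the only genuinely hard ingredient, the co-NP-hardness reduction with the cube/cross-polytope restriction, you defer to the same two references the paper itself cites, which is a legitimate way to close the argument. One point to tighten: your co-NP membership certificate is phrased as a cube vertex violating ``one of the facet inequalities of $Q$,'' but in the general problem $Q$ is presented by its vertices, its facet inequalities are not part of the input and may be exponentially many, so they cannot simply be ``individually checked'' from the given data. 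The clean certificate is a vertex $v$ of $P$ (of polynomial bit-size, verifiable to lie in $P$ from the $\HH$-description) together with a verification that $v \notin Q$, which is a single LP (is $v$ a convex combination of the vertices of $Q$?), or alternatively a separating hyperplane checked against all vertices of $Q$; and the reason a vertex witness always exists is that if every vertex of $P$ lay in $Q$, then $P = \conv(\text{vertices of } P) \subseteq Q$ by convexity.
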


In the next statements we extend this classification to
containment problems involving polytopes and spectrahedra.
See~Table~\ref{tab:complexity-containment} for a summary.
Theorems~\ref{thm:complexityVinS} and~\ref{thm:complexitySinH}
give the positive results.

\begin{table}[H]
 \begin{tabular}{l|ccc}
  \toprule
   &\hspace*{0.2cm} $\HH$ & $\VV$ & $\Scal$ \\ 
  \midrule
  $\HH$ &\hspace*{0.2cm} P & co-NP-complete & co-NP-hard \\[+0.5ex]
  $\VV$ &\hspace*{0.2cm} P & P & P \\[+0.5ex]
  $\Scal$ &\hspace*{0.2cm} ``SDP'' & co-NP-hard & co-NP-hard \\
  \bottomrule
 \end{tabular}
\\[+0.5ex]
\caption{Computational complexity of containment problems, where the rows refer
to the inner set and the columns to the outer set and $\mathcal{S}$ abbreviates
spectrahedron.}
\label{tab:complexity-containment}
\end{table}

\begin{thm} \label{thm:complexityVinS}
Deciding whether a $\mathcal{V}$-polytope is contained in a spectrahedron can
be done in polynomial time.
\end{thm}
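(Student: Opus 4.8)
The key observation is that a spectrahedron $S_B$ is a convex set, so a V-polytope $P = \conv\{v^{(1)}, \ldots, v^{(m)}\}$ is contained in $S_B$ if and only if each vertex $v^{(i)}$ lies in $S_B$. This reduces the containment question to $m$ separate membership tests. Let me verify this claim holds and check the encoding sizes.

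Let me think about whether each membership test is polynomial.\begin{proof}[Proof plan]
The plan is to exploit the \emph{convexity} of the spectrahedron $S_B$ together with the fact that a $\VV$-polytope is the convex hull of its given generators. Write the $\VV$-polytope as $P = \conv\{v^{(1)}, \ldots, v^{(m)}\}$ with $v^{(1)}, \ldots, v^{(m)} \in \Q^n$, and let $S_B = \{x \in \R^n : B(x) \succeq 0\}$ be the spectrahedron defined by the linear pencil $B(x) = B_0 + \sum_{p=1}^n x_p B_p \in \mathcal{S}_l[x]$. The central reduction is the equivalence
\begin{equation*}
  P \subseteq S_B \quad \Longleftrightarrow \quad v^{(i)} \in S_B \text{ for all } i \in \{1, \ldots, m\}.
\end{equation*}
The forward direction is immediate since each $v^{(i)} \in P$. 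The reverse direction is where convexity enters: if $B(v^{(i)}) \succeq 0$ for every $i$, then for any convex combination $x = \sum_i \lambda_i v^{(i)}$ (with $\lambda_i \ge 0$, $\sum_i \lambda_i = 1$) linearity of the pencil gives $B(x) = \sum_i \lambda_i B(v^{(i)})$, which is a nonnegative combination of positive semidefinite matrices and hence positive semidefinite. Therefore $x \in S_B$, and since every point of $P$ is such a convex combination, $P \subseteq S_B$.

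First I would establish this equivalence formally as above; it is elementary and requires only the linearity of $B$ and the convex-cone property of the positive semidefinite matrices. The remaining work is then to argue that each of the $m$ individual tests ``$B(v^{(i)}) \succeq 0$'' can be performed in time polynomial in the total input size. Each such test amounts to checking positive semidefiniteness of a single rational symmetric $l \times l$ matrix $B(v^{(i)}) = B_0 + \sum_{p=1}^n v^{(i)}_p B_p$. I would note that forming this matrix requires only rational arithmetic whose bit-sizes are polynomially bounded in the encoding lengths of the $v^{(i)}$ and the $B_p$, and that testing positive semidefiniteness of an explicit rational symmetric matrix is a standard polynomial-time task (for instance via the signs of the leading principal minors, or by a rational Gaussian-elimination / $LDL^\top$-type decomposition, or by checking the characteristic polynomial coefficients). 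Since there are $m$ such tests and each is polynomial, the overall procedure runs in polynomial time.

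The main obstacle, and the only point requiring genuine care, will be the bookkeeping on bit-complexity: one must confirm that the entries of $B(v^{(i)})$, and all intermediate quantities arising in the semidefiniteness test, stay of size polynomial in the input rather than blowing up. This is precisely the kind of subtlety the binary Turing machine model of computation (as set up in Section~\ref{se:prelim}) is designed to track, and it is the reason the equivalence alone is not quite the whole proof. I expect this step to be routine but necessary, relying on the standard fact that exact rational linear algebra on matrices of polynomially bounded bit-size can be carried out within polynomial time. Assembling the equivalence with the complexity bound on the membership tests then yields the claimed polynomial-time algorithm.
\end{proof}
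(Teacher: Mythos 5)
Your proposal is correct and takes essentially the same approach as the paper: reduce containment, via convexity of the spectrahedron and linearity of the pencil, to $m$ vertex-membership tests, each a positive-semidefiniteness check on a rational symmetric matrix, carried out in polynomial time by a rational $LDL^T$/$UDU^T$-type decomposition (the paper cites Golub--van Loan for exactly this). One small caution: among the alternatives you list, the leading-principal-minor test is not valid for \emph{semi}definiteness (e.g.\ $\mathrm{diag}(0,-1)$ passes it but is not positive semidefinite), so rely on the decomposition or the characteristic-polynomial criterion instead.
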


\begin{proof}
Given a $\VV$-presentation $P = \conv \{v^{(1)}, \ldots, v^{(m)}\}$ and a linear
matrix pencil $A(x)$, we have $P \subseteq S_A$ if and only if all the points
$v^{(i)}$ are contained in $S_A$. 
Thus, the containment problem is reduced to $m$ tests whether a certain
rational matrix is positive semidefinite. 
This can be decided in polynomial time, as one can compute, for a rational,
symmetric matrix $A$, a decomposition $A = U D U^T$ with a diagonal matrix $D$
in polynomial time (see, e.g., \cite{golub-van-loan-96}).
\end{proof}

Containment questions for spectrahedra are connected to feasibility questions
of semidefinite programs in a natural way.  A Semidefinite Feasibility Problem
(SDFP) is defined as the following decision problem (see, e.g.,
\cite{ramana1997}): 
Given a linear pencil defined by a tuple $(n;k;A_0, \ldots, A_n)$ with $n,k\in
\N$ and $A_0, \ldots, A_n$ rational symmetric matrices. Are there real numbers
$x_1, \ldots, x_n$ such that $A(x) = A_0 + \sum_{p=1}^n x_p A_p \succeq 0$, or
equivalently, is the spectrahedron $S_A$ non-empty?

Although semidefinite programs can be approximated up to an additive error of
$\varepsilon$ in polynomial time, the question 
``$\mathrm{SDFP} \in\mathrm{P}?$'' is one of the major open complexity
questions in semidefinite programming (see \cite{de-klerk-book, ramana1997}). 
Consequently, the following statement on containment of a spectrahedron in an
$\mathcal{H}$-polytope does not give a complete answer concerning polynomial
solvability of these containment questions in the Turing machine model. 
If the additional inequalities were non-strict, then we had to decide a finite
set of problems from the complement of the class SDFP.

\begin{thm} \label{thm:complexitySinH}
The problem of deciding whether a spectrahedron is contained in an 
$\mathcal{H}$-polytope can be formulated by the complement of semidefinite
feasibility problems (involving also strict inequalities), whose sizes are
polynomial in the description size of the input data.
\end{thm}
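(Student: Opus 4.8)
The plan is to reduce the containment $S_A \subseteq P$ of a spectrahedron in an $\mathcal{H}$-polytope to a finite collection of infeasibility statements about semidefinite systems, one for each defining inequality of the outer polytope. Write the outer $\mathcal{H}$-polytope as $P = \{x \in \R^n : (b + Cx)_i \ge 0, \ i = 1, \ldots, m\}$, where I rename the constraint matrix to $C$ to avoid clashing with the pencil $A(x)$. The key observation is that $S_A \subseteq P$ holds if and only if each individual halfspace $H_i = \{x : (b+Cx)_i \ge 0\}$ contains $S_A$; that is, containment in the intersection decomposes into $m$ separate halfspace-containment problems. This is the structural feature that makes the $\mathcal{H}$-outer case tractable in a way the $\mathcal{V}$-outer case is not.

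Next I would translate each single condition $S_A \subseteq H_i$ into the language of SDFP complements. The statement ``$A(x) \succeq 0 \implies (b+Cx)_i \ge 0$'' is equivalent to the statement that the system
\begin{equation*}
  A(x) \succeq 0, \qquad (b + Cx)_i < 0
\end{equation*}
has no solution $x \in \R^n$. The second condition is a strict linear inequality, which is why the theorem is phrased in terms of semidefinite feasibility problems that involve strict inequalities. Equivalently, one may homogenize or introduce a slack so that the pair (a semidefinite constraint together with a strict linear constraint) is itself expressible as an SDFP-type instance; either way, $S_A \subseteq H_i$ is precisely the assertion that this instance is infeasible, i.e.\ lies in the complement of the relevant feasibility class. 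Taking the conjunction over $i = 1, \ldots, m$ then expresses $S_A \subseteq P$ as the complement of a finite union of such instances.

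Finally I would check the size bound. Each instance uses the same pencil $A(x)$ of size $k$ together with one additional scalar linear constraint drawn from the rows of $(b, C)$; since the input already contains all of this data, each of the $m$ derived instances has description size polynomial (indeed linear) in the size of the input, and there are only $m$ of them. Hence the whole reduction is polynomial in the description size, as claimed. The only genuinely delicate point is the handling of the \emph{strict} inequality $(b+Cx)_i < 0$: this cannot be absorbed into a single non-strict semidefinite constraint without care, and it is exactly this strictness that prevents the theorem from asserting membership in the complement of the (non-strict) SDFP class and hence from settling polynomial solvability outright. I expect this bookkeeping of strict versus non-strict constraints to be the main obstacle, whereas the decomposition over facets and the size estimate are routine.
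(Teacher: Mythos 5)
Your proposal is correct and follows essentially the same route as the paper's proof: decompose containment into one condition per facet of the $\mathcal{H}$-polytope, and express $S_A \subseteq H_i$ as the infeasibility of the ``semi-open'' system $A(x) \succeq 0$, $(b+Cx)_i < 0$, giving $m$ instances each of size polynomial in the input. Your closing remark about the strict inequality being the obstacle to a clean complexity classification matches the paper's own discussion preceding the theorem.
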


\begin{proof}
Let $A(x)$ be a linear matrix pencil and 
$P = \{x \in \R^n \, : \, b + Bx \ge 0\}$ with $B \in \Q^{m \times n}$ 
be an $\mathcal{H}$-polytope.
For each $i \in \{1, \ldots, m\}$ incorporate the linear condition
$ b_i + \sum_{j=1}^n b_{ij} x_j < 0$ into the linear pencil $A(x)$.
If one of the resulting $m$ (``semi-open'') spectrahedra is nonempty then 
$S_A \not\subseteq P$.
\end{proof}

The positive results in Theorems~\ref{thm:complexityVinS}
and~\ref{thm:complexitySinH} are contrasted by the following hardness results.

\begin{thm} \label{thm:complexity3} \
  
\begin{enumerate}
\item
  Deciding whether a spectrahedron is contained in a $\mathcal{V}$-polytope is
  co-NP-hard.
\item
  Deciding whether an $\mathcal{H}$-polytope or a spectrahedron is contained
  in a spectrahedron is co-NP-hard. This hardness statement persists if the
  $\mathcal{H}$-polytope is a standard cube or if the outer spectrahedron is a
  ball.
\end{enumerate}
\end{thm}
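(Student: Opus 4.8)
The plan is to obtain the co-NP-hardness of each case by a polynomial reduction from a problem already known to be NP-hard (for the complement), using throughout the elementary fact that every polyhedron embeds as a spectrahedron via the diagonal pencil~\eqref{eq:polytope}. I would first dispose of~(1): starting from an instance ``$\HH$-polytope $P \subseteq \VV$-polytope $Q$'' of the co-NP-complete problem in Proposition~\ref{prop:complexity1}, replace $P$ by its diagonal spectrahedral representation $P_{A}$ from~\eqref{eq:polytope}. This conversion is polynomial in the input size and leaves the set unchanged, so $S_{A}=P\subseteq Q$ is equivalent to the original instance; hence ``spectrahedron $\subseteq \VV$-polytope'' is co-NP-hard. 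The same embedding shows that co-NP-hardness of ``$\HH$-polytope $\subseteq$ spectrahedron'' automatically transfers to ``spectrahedron $\subseteq$ spectrahedron,'' so for~(2) it suffices to prove the two asserted persistence statements, each of which is itself an instance of the $\HH$-polytope/spectrahedron case.

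For the persistence with the inner set a standard cube I would invoke the matrix cube problem. Writing the outer spectrahedron as $S_{B}=\{u\in\R^{n}: B(u)\succeq 0\}$, the containment $[-1,1]^{n}\subseteq S_{B}$ holds precisely when $B(u)\succeq 0$ for all $u$ in the cube, which is exactly the statement that the matrix cube $\{B_{0}+\sum_{p}u_{p}B_{p}: u\in[-1,1]^{n}\}$ lies in the positive semidefinite cone. Ben-Tal and Nemirovski proved this problem NP-hard~\cite{ben-tal-nemirovski-2002}, so deciding the containment is co-NP-hard.

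For the persistence with the outer set a ball I would reduce from \textsc{Partition}. Given positive integers $a_{1},\dots,a_{n}$, put $a_{\max}=\max_{i}a_{i}$ and consider the $\HH$-polytope $P=\{x\in\R^{n}: -1\le x_{i}\le 1\ (1\le i\le n),\ \sum_{i}a_{i}x_{i}=0\}$ together with the ball $S_{B}=\{x:\|x\|^{2}\le\rho\}$, whose defining inequality is encoded by the rational linear pencil $\left[\begin{smallmatrix}1 & x^{T}\\ x & \rho I_{n}\end{smallmatrix}\right]\succeq 0$ obtained as a Schur complement, so that $S_{B}$ is genuinely a ball with rational data even when its radius $\sqrt{\rho}$ is irrational. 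Since $\|\cdot\|^{2}$ is convex, its maximum over $P$ is attained at a vertex, and every vertex of $P$ pins $n-1$ coordinates to $\pm 1$ and determines the last one by the hyperplane. A full $\pm 1$-vertex lies on the hyperplane iff \textsc{Partition} has a solution, in which case the maximum equals $n$; otherwise the free coordinate $t$ satisfies $t=-c/a_{j}$ with $c\in\Z$ and $|c|\le a_{j}-1$, so $t^{2}\le 1-1/a_{\max}$ and the maximum is at most $n-1/a_{\max}$. Choosing the rational value $\rho=n-\tfrac{1}{2a_{\max}}$ separates the two cases, so $P\subseteq S_{B}$ fails exactly when \textsc{Partition} is solvable; as \textsc{Partition} is NP-complete, the containment problem is co-NP-hard.

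I expect the ball case to be the main obstacle: the delicate point is the integrality/gap argument guaranteeing that in a no-instance the maximum of $\|x\|^{2}$ stays bounded away from $n$ by an inverse-polynomial margin, which is precisely what makes the threshold $\rho$ a valid, polynomially sized rational separator. The cube case hinges only on correctly identifying the containment with the matrix cube problem, while the embedding argument in~(1) and the spectrahedron/spectrahedron reduction are routine. Alternatively, the ball case also follows from the known NP-hardness of computing the circumradius of an $\HH$-polytope, cf.~\cite{gritzmann-klee-93}.
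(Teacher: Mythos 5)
Your proposal is correct, and three of its four ingredients coincide with the paper's proof: part (1) via the diagonal embedding~\eqref{eq:polytope} of an $\HH$-polytope as a spectrahedron, the cube case via Ben-Tal--Nemirovski (the paper cites exactly their Proposition~4.1, i.e.\ the matrix cube problem), and the spectrahedron-in-spectrahedron case as an immediate consequence of the $\HH$-polytope case. Where you genuinely diverge is the ball case. The paper reduces from \textsc{3-Sat}: it takes the cube $[-1,1]^n$, adds one inequality per clause, and chooses the radius so that the cube vertices just peak through the sphere; the heart of the argument is a componentwise rounding step showing that any point of $P$ outside the ball is within $1/3$ of a $\pm 1$ vertex and that rounding preserves the clause inequalities. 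You instead reduce from \textsc{Partition}, slicing the cube by the hyperplane $\sum_i a_i x_i = 0$ and using that the convex function $\|x\|^2$ attains its maximum at a vertex, where an integrality argument gives the gap $1/a_{\max}$; your vertex analysis and the threshold $\rho = n - \tfrac{1}{2a_{\max}}$ are sound, and your explicit Schur-complement pencil $\bigl[\begin{smallmatrix} 1 & x^T \\ x & \rho I_n \end{smallmatrix}\bigr]$ cleanly settles the rationality of the ball's representation, a detail the paper leaves implicit (its normal form~\eqref{eq:ellipsoid} would involve $1/r$ with $r$ irrational). The trade-off: your reduction is more elementary (no rounding, just vertex enumeration and convexity), but since \textsc{Partition} is only weakly NP-complete, your separating gap is pseudo-polynomially small and the hard instances necessarily involve numbers of exponential magnitude; the paper's \textsc{3-Sat} reduction has a gap independent of the input coefficients, so it establishes hardness in the strong sense, i.e.\ even for instances with polynomially bounded numerical data. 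Both establish co-NP-hardness as stated in the binary Turing model.
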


\begin{proof}
Deciding whether a spectrahedron $S_A$ is contained in a $\mathcal{V}$-polytope 
is co-NP-hard since already deciding whether an $\mathcal{H}$-polytope
is contained in a $\mathcal{V}$-polytope is co-NP-hard by
Proposition~\ref{prop:complexity1}.

Concerning the second statement,
co-NP-hardness of containment of $\mathcal{H}$-polytopes in spectrahedra
follows from Ben-Tal and Nemirovski
\cite[Proposition 4.1]{ben-tal-nemirovski-2002},
who use a reduction from the maximization of a positive semidefinite 
quadratic form over the unit cube. 

For the co-NP-hardness of containment of an $\mathcal{H}$-polytope in a ball,
we provide a reduction from
the NP-complete 3-satisfiability problem (3-SAT \cite{Cook1971}):
Does a given Boolean formula $\Phi$ over the variables $z_1, \ldots, z_n$ in
conjunctive normal form,  where each clause has at most 3 literals, admit an
assignment that evaluates \textsc{True}?

The $2^n$ possible assignments $\{\textsc{False}, \textsc{True}\}^n$ for $z_1,
\ldots, z_n$ can be identified with the vertices of an $n$-dimensional cube
$[-1,1]^n$. 
Let $B$ be a ball (which is a spectrahedron), such that the vertices of
$[-1,1]^n$ just ``peak'' through its boundary sphere $S$. 
Precisely (assuming w.l.o.g. $n \ge 2$), choose the radius $r$ of $B$ such that
\begin{equation*}
\left(\frac{1}{6}\right)^2 +
\left(\sqrt{n} -
\frac{1}{6}\right)^2 \ < \ r^2 \ < \ n \, .
\end{equation*}
Note that such a radius can be determined in polynomial time and size.

For the definition of the $\mathcal{H}$-polytope $P$, we start from the
$\mathcal{H}$-presentation
$\{x \in \R^n \, : \, -1 \le x_i \le 1 , \, 1 \le i \le n\}$ of $[-1,1]^n$ and
add one inequality for each clause of $\Phi$.
Let $\mathcal{C} = \mathcal{C}_1 \vee \cdots \vee \mathcal{C}_m$ be a 3-SAT
formula with clauses $\mathcal{C}_1, \ldots, \mathcal{C}_m$.
Denote by $\overline{z_i}$ the complement  of a variable $z_i$, and define the
literals $z_i^1 := z_i$, $z_i^0 := \overline{z_i}$. If the clause
$\mathcal{C}_i$ is of the form 
$\mathcal{C}_i = z_{i_1}^{e_{i_1}}\vee z_{i_2}^{e_{i_2}}\vee z_{i_3}^{e_{i_3}}$
with $e_{i_1}, e_{i_2}, e_{i_3} \in \{0,1\}$ then add the inequality
\[
  (-1)^{e_{i_1}} x_{i_1} + 
  (-1)^{e_{i_2}} x_{i_2} + 
  (-1)^{e_{i_3}} x_{i_3} \ \le \ 1 \, .
\]
If $P \subseteq B$ then, by the choice of $r$, none of the points in
$\{-1,1\}^n$
can be contained in $P$ and thus there does not exist a valid assignment for
$\Phi$. 
Conversely, assume that $P$ is not contained in $B$.
Let $p \in P \setminus B \subseteq [-1,1]^n$. We claim that componentwise
rounding
of $p$ yields an integer point $p' \in \{-1,1\}^n$  satisfying all defining
inequalities of
$P$. 
To see this, first note that by the choice of the radius of $B$, the components
$p_i$ of $p$ differ at most $\varepsilon < \frac{1}{3 \sqrt{2}} < \frac{1}{3}$
from
either $-1$ or $1$.

In order to inspect what happens to the inequalities when rounding, assume
without loss of generality that the inequality is of the form $x_1 + x_2 + x_3
\geq -1$.
We assume a rounded vector $p'$ does not satisfy the inequality, even though
$p$ does:
\begin{equation}
  p'_1 +p'_2 + p'_3  \ < \ -1  \, , \quad \text{but  }
  p_1 +p_2 + p_3 \ \geq \  -1 \label{eq:ineq1} \, .
\end{equation}
Since $p' \in \{-1,1\}^n$, \eqref{eq:ineq1} implies $p'_1 = p'_2 = p'_3 =-1$.
Hence, at least one of $p_1, p_2$ and $p_3$ differs from either $-1$ or $1$ by
more than $1/3$, which is a contradiction. This completes the reduction from
3-SAT.

Finally, deciding whether a spectrahedron $S_A$ is contained in a spectrahedron
$S_B$ is co-NP-hard, since already deciding whether an $\mathcal{H}$-polytope is
contained in a spectrahedron is co-NP-hard.
\end{proof}

\section{Relaxations and exact cases}
\label{sec:criterion}

In this section, we revisit and extend the relaxation techniques for the
containment of spectrahedra from~\cite{Helton2010}.
Our point of departure is the containment problem for pairs of $\HH$-polytopes,
which by Proposition~\ref{prop:complexity1} can be decided in polynomial time.
Indeed, this can be achieved by solving a linear program, as reviewed by the
following necessary and sufficient criterion.

\begin{prop} \label{prop:polytope_inclusion_sufficient}
Let $P_A = \{x \in \R^n \, : \, \mathds{1}_k + Ax \ge 0 \}$ and 
$P_B =\{x \in \R^n \, : \, \mathds{1}_l + Bx \ge 0 \}$ be polytopes.
There exists a right stochastic matrix $C$ (nonnegative entries, each row
summing to one) with $B = C A$ if and only if $P_A \subseteq P_B$.
\end{prop}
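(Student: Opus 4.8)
The plan is to prove the two implications separately, in each case reading the $j$-th row of $B$ as a single linear inequality and comparing it against the inequality system defining $P_A$. Throughout, write $A_i$ and $B_j$ for the rows of $A$ and $B$, so that $P_A = \{x : A_i x \ge -1,\ 1 \le i \le k\}$ and $P_B = \{x : B_j x \ge -1,\ 1 \le j \le l\}$.

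For the easy direction, I would assume $B = CA$ for a right stochastic $C = (c_{ji})$ and argue by a direct computation. Given $x \in P_A$ we have $Ax \ge -\mathds{1}_k$ entrywise; since $C$ has nonnegative entries, left-multiplication preserves this inequality, giving $Bx = CAx \ge -C\mathds{1}_k$. Because each row of $C$ sums to one we have $C\mathds{1}_k = \mathds{1}_l$, hence $\mathds{1}_l + Bx \ge 0$ and $x \in P_B$. This step uses only nonnegativity together with the row-sum condition, and needs no boundedness.

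For the converse, I would observe that $P_A \subseteq P_B$ says exactly that each inequality $B_j x \ge -1$ is valid on all of $P_A$. Since $P_A$ is nonempty (it contains the origin), the affine Farkas lemma supplies, for each $j$, multipliers $c_{j1}, \dots, c_{jk} \ge 0$ with $B_j = \sum_i c_{ji} A_i$ and $\sum_i c_{ji} \le 1$. Collecting them into $C = (c_{ji})$ yields $B = CA$ with $C$ nonnegative but only row sums \emph{at most} one. The main obstacle is therefore upgrading ``$\le 1$'' to ``$=1$'', and this is precisely where boundedness of $P_A$ enters. Because $P_A$ is a polytope with the origin in its interior (the normal form gives $\mathds{1}_k + A\cdot 0 = \mathds{1}_k > 0$), the origin lies in the interior of $\conv\{-A_1, \dots, -A_k\}$; in particular $0 \in \conv\{-A_i\}$, so there is a nonnegative relation $\sum_i \mu_i A_i = 0$ with $\mu_i \ge 0$ and $\sum_i \mu_i = 1$. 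I would then replace each $c_{ji}$ by $c_{ji} + \bigl(1 - \sum_t c_{jt}\bigr)\mu_i$: the entries stay nonnegative, the identity $B_j = \sum_i c_{ji} A_i$ is unchanged (as $\sum_i \mu_i A_i = 0$), and the row sums become exactly one, so the adjusted $C$ is right stochastic.

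Alternatively, and perhaps more transparently, I would run the converse through polar duality. For a polytope with the origin in its interior one has $P_A^{\circ} = \conv\{-A_1, \dots, -A_k\}$ and likewise $P_B^{\circ} = \conv\{-B_1,\dots,-B_l\}$, and polarity reverses inclusions, so $P_A \subseteq P_B$ is equivalent to $P_B^{\circ} \subseteq P_A^{\circ}$. The latter says each generator $-B_j$ lies in $\conv\{-A_i\}$, i.e. $-B_j = \sum_i c_{ji}(-A_i)$ with $c_{ji} \ge 0$ and $\sum_i c_{ji} = 1$, which is $B = CA$ with $C$ right stochastic. This route bakes in the sum-to-one condition from the outset and so avoids the fix-up step entirely; the only ingredients are the standard polar description of an $\mathcal{H}$-polytope and the order-reversing property of polarity.
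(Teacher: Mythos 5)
Your first argument is, step for step, the paper's own proof. The forward direction is the same one-line computation $\mathds{1}_l + Bx = C(\mathds{1}_k + Ax) \ge 0$, and your converse is exactly the paper's: apply the affine form of Farkas' Lemma to each row of $\mathds{1}_l + Bx$ to obtain nonnegative multipliers whose row sum is $1 - c_{j0} \le 1$, then absorb the deficit using a convex representation $0 = \sum_i \mu_i(-A_i)$, which exists because $P_A$ is a polytope with $0$ in its interior (the paper phrases this via the polar polytope, whose vertices are the rows of $-A$); your correction $c_{ji} \mapsto c_{ji} + \bigl(1 - \sum_t c_{jt}\bigr)\mu_i$ is literally the paper's $c_{ij} = c'_{ij} + c'_{i0}\lambda_{ij}$. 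Your alternative polar-duality route is genuinely different and is a clean shortcut: since both polytopes contain the origin in their interior, the bipolar theorem gives $P_A \subseteq P_B$ if and only if $\conv\{-B_1,\ldots,-B_l\} = P_B^{\circ} \subseteq P_A^{\circ} = \conv\{-A_1,\ldots,-A_k\}$, and membership of each $-B_j$ in $\conv\{-A_i\}$ hands you the stochastic rows directly, with no Farkas step and no fix-up. What the Farkas route (the paper's and your first) buys instead is the certificate-style template that the paper then generalizes to linear pencils in Lemma~\ref{lem:polytope_inclusion_neccessary} and the semidefinite criteria; the polar route buys brevity, at the cost of leaning on the bipolar theorem and of concentrating the role of boundedness inside the identity $P_A^{\circ} = \conv\{-A_i\}$ (without boundedness one must adjoin $0$ as a generator of the polar, and the sum-to-one condition is lost).
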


For preparing related statements in more general contexts below, we review the
proof which uses the following version of Farkas' Lemma:

\begin{prop}[Affine form of Farkas' Lemma {\cite[Corollary
7.1h]{Schrijver1986}}] \label{prop:farkas_aff}
Let the polyhedron $P=\{ x \in \R^n \ : \ l_i(x) \geq 0, i=1, \ldots, m\}$
with affine functions $l_i:\R^n \to \R$ be nonempty. 
Then every affine $l:\R^n \to \R$ that is nonnegative on $P$ can be written as 
$l(x) = c_0 + \sum_{i=1}^m c_i l_i(x)$ with nonnegative coefficients $c_i$. 
\end{prop}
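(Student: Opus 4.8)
The plan is to reduce this affine statement to the homogeneous (linear) form of Farkas' Lemma by homogenization. Write each constraint as $l_i(x) = \langle a_i, x\rangle + \beta_i$ with $a_i \in \R^n$, $\beta_i \in \R$, and the target functional as $l(x) = \langle a, x\rangle + \beta$. Matching linear parts and constant terms, the desired conclusion $l = c_0 + \sum_{i=1}^m c_i l_i$ is equivalent to finding $c_0, \ldots, c_m \geq 0$ with $a = \sum_{i=1}^m c_i a_i$ and $\beta = c_0 + \sum_{i=1}^m c_i \beta_i$.

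First I would lift everything to $\R^{n+1}$ by introducing a homogenizing coordinate $x_0$. To each affine functional associate the linear functional $\hat l_i(x, x_0) = \langle a_i, x\rangle + \beta_i x_0$ and $\hat l(x, x_0) = \langle a, x\rangle + \beta x_0$, and adjoin the extra generator $x_0$ itself. In these terms the sought representation is exactly $\hat l = c_0\, x_0 + \sum_{i=1}^m c_i \hat l_i$ with nonnegative $c_i$, since the coefficient of $x_0$ recovers the constant-term equation and the coefficient of $x$ recovers the linear-part equation. By the homogeneous form of Farkas' Lemma (whose content is that a finitely generated convex cone is closed, so that it coincides with the set of functionals nonnegative on its polar), such a representation exists precisely when the implication
\[
  x_0 \geq 0 \ \text{and}\ \hat l_i(x, x_0) \geq 0 \ (1 \le i \le m) \ \Longrightarrow\ \hat l(x, x_0) \geq 0
\]
holds for all $(x, x_0) \in \R^{n+1}$. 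Thus the whole problem collapses to verifying this homogeneous implication.

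To verify it I would split on the sign of $x_0$. When $x_0 > 0$, the scaled point $x/x_0$ satisfies $l_i(x/x_0) = \hat l_i(x, x_0)/x_0 \geq 0$ and hence lies in $P$; the hypothesis that $l \geq 0$ on $P$ then gives $\hat l(x, x_0) = x_0\, l(x/x_0) \geq 0$. The delicate case is $x_0 = 0$, where the statement reduces to: $\langle a_i, x\rangle \geq 0$ for all $i$ forces $\langle a, x\rangle \geq 0$, i.e.\ $l$ does not decrease along any feasible recession direction. Here is exactly where nonemptiness of $P$ enters: fixing any $x^* \in P$, the ray $x^* + t x$ with $t \geq 0$ stays in $P$, because $l_i(x^* + t x) = l_i(x^*) + t\langle a_i, x\rangle \geq 0$; hence $l(x^* + t x) = l(x^*) + t \langle a, x\rangle \geq 0$ for all $t \geq 0$, and letting $t \to \infty$ forces $\langle a, x\rangle \geq 0$.

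I expect the $x_0 = 0$ (recession-direction) case to be the crux, both because it is the only place the nonemptiness hypothesis is used --- and the statement is indeed false for $P = \emptyset$, where every affine $l$ is vacuously nonnegative on $P$ --- and because it is the part that does not follow by naive scaling. The remaining nontrivial ingredient is the homogeneous Farkas' Lemma itself, which I would invoke as the standard base case rather than reprove; its underlying content is the closedness of finitely generated cones, established by a separating-hyperplane argument.
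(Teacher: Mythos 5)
Your proof is correct, but note that there is no internal proof to compare it against: the paper states this proposition as a quoted result, citing \cite[Corollary 7.1h]{Schrijver1986}, and uses it as a black box in the proof of Proposition~\ref{prop:polytope_inclusion_sufficient}. Your homogenization argument is the standard textbook derivation and is, in substance, the route behind the cited reference. The individual steps all check out: lifting to $\R^{n+1}$ with the extra generator $x_0$ correctly encodes the constant-term equation $\beta = c_0 + \sum_{i=1}^m c_i\beta_i$ as the $x_0$-coefficient; the appeal to the homogeneous Farkas lemma (equivalently, closedness of finitely generated cones) is a legitimate base case to invoke without reproving; your case split on the sign of $x_0$ is exhaustive precisely because $x_0 \geq 0$ is itself one of the cone constraints, so $x_0 < 0$ never occurs; the scaling step for $x_0 > 0$ is routine; and you correctly isolate the recession case $x_0 = 0$ as the only place nonemptiness enters, settling it with the ray $x^* + tx$, $t \to \infty$. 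One micro-quibble: your aside that the statement ``is indeed false for $P = \emptyset$'' is true but deserves a witness, since for some infeasible systems the conclusion happens to hold anyway (the infeasibility certificate lets one drive the constant term down, but not realize an arbitrary linear part). A concrete counterexample: in $\R^2$ take $l_1(x) = x_1$, $l_2(x) = -x_1 - 1$, so $P = \emptyset$, and $l(x) = x_2$; no nonnegative combination $c_0 + c_1 l_1 + c_2 l_2$ can produce the $x_2$-coefficient, confirming your claim. This does not affect the validity of the proof itself.
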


\begin{proof} (of Proposition~\ref{prop:polytope_inclusion_sufficient}.)
If $B = CA$ with a right stochastic matrix $C$, then for any  $x \in P_A$ we
have $\mathds{1}_l + Bx = \mathds{1}_l + C (Ax) \ge 0$, i.e., $P_A \subseteq P_B$.

Conversely, if $P_A \subseteq P_B$, then for any $i \in \{1, \ldots, l\}$
the $i$-th row $(\mathds{1}_l + Bx)_i$ of $\mathds{1}_l + Bx$ is nonnegative on
$P_A$. Hence, by Proposition~\ref{prop:farkas_aff}, $(\mathds{1}_l + Bx)_i$ can
be written as a linear combination
\[
  (\mathds{1}_l + Bx)_i \ = \ 1 + (Bx)_i \ 
  = \ c'_{i0} + \sum_{j=1}^k c'_{ij} (\mathds{1}_k + Ax)_j
\]
with nonnegative coefficients $c'_{ij}$. Comparing coefficients yields
$\sum_{j=1}^k c'_{ij} = 1 - c'_{i0}$.
Since $P_A$ is a polytope with zero in its interior, the vertices of the polar
polytope $P_A^{\circ}$ are given by the rows $-A_j$ of $-A$.
Hence, for every $i \in \{1, \ldots, l\}$ there exists a convex combination 
$0 = \sum_{j=1}^k \lambda_{ij} (-A_j)$ with nonnegative $\lambda_{ij}$ and 
$\sum_{j=1}^k \lambda_{ij} = 1$, which we write as an identity 
$\sum_{j=1}^k \lambda_{ij} (\mathds{1}_k + A x)_j = 1$
of affine functions. By multiplying that equation with $c'_{i0}$, we
obtain nonnegative $c''_{ij}$ with 
$\sum_{j=1}^k c''_{ij} (\mathds{1}_k + Ax)_j = c'_{i0}$, which yields
\[
  1 + (Bx)_i \ = \ \sum_{j=1}^k (c'_{ij} + c''_{ij}) (\mathds{1}_k + Ax)_j .
\]
Hence, $C = (c_{ij})$ with $c_{ij} := c'_{ij} + c''_{ij}$ is a right stochastic
matrix with $B = CA$. 
\end{proof}

The sufficiency part from Proposition~\ref{prop:polytope_inclusion_sufficient}
can be extended to the case of spectrahedra in a natural way. The natural 
description of a polytope $P$ as a spectrahedron, as introduced in 
Section~\ref{se:prelim}, is given by
\[
  P \ = \ P_A \ = \ \left\{ x \in \R^n \ : \ 
  A(x) = \diag(a_1(x), \ldots, a_k(x)) \succeq 0 \right\},
\]
where $a_i(x)$ is the $i$-th entry of the vector $\mathds{1}_k + Ax$.
Then, as in the definition of a linear pencil \eqref{eq:linearpencil}, $A_p$ is
the $k\times k$ diagonal matrix $\diag(A_{:,\,p})$ of the $p$-th column of $A$.
Proceed in the same way with $P_B$. 
Now define a $kl\times kl$ matrix $C'$ by writing the entries of $C$ on the
diagonal, i.e. $C' =
\diag(c_{11},\ldots,c_{l1},c_{12},\ldots,c_{l2},\ldots,c_{1k},\ldots,c_{lk})$.
Then the condition from Proposition~\ref{prop:polytope_inclusion_sufficient}
translates to
\begin{equation}\label{eq:polytope-inclusion}
  C' \text{ diagonal, }
  C'=\left( C'_{ij} \right)_{i,j=1}^{k}\succeq 0,
  \quad I_{l} = \sum_{i=1}^{k} C'_{ii},
  \quad \forall p=1,\ldots,n:\ B_p 
  = \sum_{i=1}^k a^{p}_{ii} C'_{ii}\, ,
\end{equation}
where $a^p_{ij}$ is the $(i,j)$-th entry of $A_p$ and $C'_{ij}\in\R^{l\times
l}$.
Theorem~\ref{thm:Helton_Contain} below tells us, that $C'$ does not need to be
diagonal and yields a sufficient condition for the containment of spectrahedra. 

\subsection{A sufficient condition for containment of a spectrahedron in a
spectrahedron}
\label{sec:sufficient}

Let $A(x) \in \mathcal{S}_k[x]$ and $B(x) \in \mathcal{S}_l[x]$ be linear pencils.

In the following, the indeterminate matrix $C=\left(C_{ij}\right)_{i,j=1}^{k}$ 
(``\emph{Choi matrix}'') is a symmetric $kl\times kl$-matrix where the $C_{ij}$
are $l\times l$-blocks.
By showing the equivalence of containment of the so-called matricial relaxations
of two spectrahedra $S_{A},\ S_{B}$ given by monic linear pencils and the
existence of a completely positive unital linear map 
$ \tau :\ \linspan\{A_0,A_1,\ldots,A_n\} \rightarrow
  \linspan\{B_0,B_1,\ldots,B_n\},\ A_p\mapsto B_p $, 
the authors of \cite{Helton2012, Helton2010} proved that the system
\begin{equation}\label{eq:inclusion}
  C = \left( C_{ij} \right)_{i,j=1}^{k}\succeq 0,
  \quad \forall p = 0,\ldots,n:\ B_{p} 
  = \sum_{i,j=1}^{k} a^{p}_{ij} C_{ij}
\end{equation}
has a solution if and only if the matricial relaxation of $S_A$ is contained in
the one of $S_B$. If so, then $S_A\subseteq S_B$. We show the latter statement 
in an elementary way, see Theorem~\ref{thm:Helton_Contain}.

Moreover, in our approach it becomes apparent that we can relax the criterion
given by Helton, Klep and McCullough by replacing the linear constraint on the
constant matrices in~\eqref{eq:inclusion} with semidefinite constraints,
\begin{equation}\label{eq:inclusion_relaxed}
  C = \left( C_{ij} \right)_{i,j=1}^{k}\succeq 0, \quad
  B_0 - \sum_{i,j=1}^{k} a^0_{ij} C_{ij} \succeq 0, \quad 
  \forall p = 1,\ldots,n:\ B_{p} 
  = \sum_{i,j=1}^{k} a^{p}_{ij} C_{ij} .
\end{equation}

This relaxed system is still sufficient for containment of spectrahedra as the
following theorem shows.

\begin{thm} \label{thm:Helton_Contain}
Let $A(x) \in \mathcal{S}_k[x]$ and $B(x) \in \mathcal{S}_l[x]$ be linear
pencils. 
If one of the systems~\eqref{eq:inclusion} or~\eqref{eq:inclusion_relaxed} is
feasible then $S_A\subseteq S_B$.
\end{thm}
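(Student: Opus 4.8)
The plan is to reduce the positive semidefiniteness of $B(x)$ on $S_A$ to a single congruence-type inequality. The starting observation is that both constraint systems let one assemble $B(x)$ out of the blocks of the Choi matrix $C$. Adopting the convention $x_0 := 1$, so that $A(x)_{ij} = \sum_{p=0}^n x_p a^p_{ij}$ denotes the $(i,j)$-entry of $A(x)$, I would substitute the linear equations $B_p = \sum_{i,j=1}^k a^p_{ij} C_{ij}$ of \eqref{eq:inclusion} and interchange the order of summation:
\[
  B(x) \;=\; \sum_{p=0}^n x_p B_p
  \;=\; \sum_{p=0}^n x_p \sum_{i,j=1}^k a^p_{ij} C_{ij}
  \;=\; \sum_{i,j=1}^k \Bigl(\sum_{p=0}^n x_p a^p_{ij}\Bigr) C_{ij}
  \;=\; \sum_{i,j=1}^k A(x)_{ij}\, C_{ij}.
\]
Thus it suffices to introduce the linear map $\Phi_C : \mathcal{S}_k \to \mathcal{S}_l$, $M \mapsto \sum_{i,j=1}^k M_{ij}\, C_{ij}$, to record the identity $B(x) = \Phi_C(A(x))$, and to prove that $\Phi_C$ preserves positive semidefiniteness.

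The key lemma is: if $C = (C_{ij})_{i,j=1}^k \succeq 0$, then $M \succeq 0$ implies $\Phi_C(M) \succeq 0$. To prove it I would fix a factorization $M = \sum_r w^{(r)} (w^{(r)})^T$ with $w^{(r)} \in \R^k$ (available since $M \succeq 0$), so that by linearity $\Phi_C(M) = \sum_r \Phi_C\bigl(w^{(r)}(w^{(r)})^T\bigr)$. For a single rank-one term $ww^T$ one computes, using the block structure of $C$,
\[
  \Phi_C(ww^T) \;=\; \sum_{i,j=1}^k w_i w_j\, C_{ij}
  \;=\; (w \otimes I_l)^{T}\, C \,(w \otimes I_l) \;\succeq\; 0,
\]
the last inequality being the congruence of the positive semidefinite matrix $C$ by the $kl \times l$ matrix $w \otimes I_l$. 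Summing the (automatically positive semidefinite) rank-one contributions gives $\Phi_C(M) \succeq 0$. This is exactly the elementary substitute for the complete-positivity / operator-theoretic argument of Helton, Klep and McCullough, and it is the heart of the statement.

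Combining the two pieces settles system \eqref{eq:inclusion}: for $x \in S_A$ we have $A(x) \succeq 0$, hence $B(x) = \Phi_C(A(x)) \succeq 0$ by the lemma, i.e. $x \in S_B$. For the relaxed system \eqref{eq:inclusion_relaxed} only the constant-matrix equation is weakened, so I would set $D := B_0 - \sum_{i,j=1}^k a^0_{ij} C_{ij}$ with $D \succeq 0$ by hypothesis; repeating the summation with $x_0 = 1$ now yields $B(x) = \Phi_C(A(x)) + D$. Since both summands are positive semidefinite whenever $x \in S_A$, again $B(x) \succeq 0$. The only genuine obstacle is the positivity lemma above; everything else is a bookkeeping manipulation of the linear constraints, and the relaxed case costs nothing beyond carrying along the slack term $D$.
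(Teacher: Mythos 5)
Your proof is correct, and while it follows the same skeleton as the paper's (substitute the linear constraints, then show the resulting block combination is positive semidefinite), the heart of the argument --- why $M \succeq 0$ and $C \succeq 0$ force $\sum_{i,j=1}^k M_{ij} C_{ij} \succeq 0$ --- is reached by a genuinely different route. Both proofs first write $B(x) = \sum_{i,j=1}^k (A(x))_{ij}\, C_{ij}$ (with a slack $D \succeq 0$ added in the relaxed case, or, in the paper's phrasing, with ``$=$'' weakened to ``$\succeq$''). The paper then forms the Kronecker product $A(x) \otimes C$, which is positive semidefinite by the standard fact cited at \eqref{eq:kronecker}, extracts the principal submatrix $\bigl( (A(x))_{ij} C_{ij} \bigr)_{i,j=1}^k$ (a Khatri--Rao product, as its subsequent remark notes), and compresses it by the stacked identity $\mathbb{I} = [I_l, \ldots, I_l]^T$. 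You avoid the Kronecker product entirely: decomposing $A(x) = \sum_r w^{(r)} (w^{(r)})^T$ into rank-one terms, each contribution to $\Phi_C(A(x))$ is the congruence $(w^{(r)} \otimes I_l)^T\, C\, (w^{(r)} \otimes I_l) \succeq 0$, and the sum of these is positive semidefinite. Your version is more self-contained --- its only inputs are that positive semidefinite matrices admit rank-one decompositions and that congruence preserves positive semidefiniteness, whereas the PSD-ness of Kronecker products invoked by the paper is itself usually proved by exactly such a rank-one argument --- and your map $\Phi_C$ makes the connection to the completely positive maps of Helton, Klep and McCullough transparent, since $C$ is precisely its Choi matrix. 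What the paper's formulation buys is the explicit Khatri--Rao intermediate object, which it reuses in the remark following the theorem; your slack-term bookkeeping with $D = B_0 - \sum_{i,j=1}^k a^0_{ij} C_{ij} \succeq 0$ for system \eqref{eq:inclusion_relaxed} is an equivalent, equally valid way of handling what the paper does by specializing inequalities to equalities.
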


\begin{proof}
First we show the statement for~\eqref{eq:inclusion_relaxed}.

For $x\in S_A$, the last two conditions in~\eqref{eq:inclusion_relaxed} imply
\begin{align} \label{eq:containment_proof}
  B(x) &= B_0 + \sum_{p=1}^n x_{p} B_{p}\ \succeq \ \sum_{i,j=1}^{k} \,
a^{0}_{ij}\, C_{ij}
    + \sum_{p=1}^n \sum_{i,j=1}^{k} x_{p}\, a^{p}_{ij}\, C_{ij} 
   \ =\ \sum_{i,j=1}^{k} \left(A(x)\right)_{ij} C_{ij} \, .
\end{align}

Since $A(x)$ and $C$ are positive semidefinite, the Kronecker product
$A(x)\otimes C$ is positive semidefinite as well, see~\eqref{eq:kronecker}. As a
consequence, all principal submatrices of $A(x)\otimes C$ are positive
semidefinite. Consider the principal submatrix where we take the $(i,j)$-th
sub-block of every $(i,j)$-th block, 
\[
  \left( \left( A(x) \right)_{ij} C_{ij} \right)_{i,j=1}^{k}\in
 \mathcal{S}_{kl}[x] .
\]
To be more precise, $A(x)\otimes C$ is a $k^2 l\times k^2 l$-matrix with
$k\times k$ blocks of the form
\[
  (A(x))_{ij} C = 
  \begin{bmatrix}
    (A(x))_{ij} C_{11} & \cdots & (A(x))_{ij} C_{1k} \\
    \vdots & (A(x))_{ij} C_{ij} & \vdots \\[+0.5ex]
    (A(x))_{ij} C_{k1} & \cdots & (A(x))_{ij} C_{kk}
  \end{bmatrix}
  \in\mathcal{S}_{kl} .
\]
(Remember that $(A(x))_{ij}$ is a scalar). Now we take the
$(i,j)$-th block of $(A(x))_{ij} C$, i.e. $(A(x))_{ij} C_{ij}$.

Set $\mathbb{I} = \left[ I_l, \ldots, I_l \right]^T $. Then the claim for
system~\eqref{eq:inclusion_relaxed} follows from the
fact that positive semidefiniteness ``$\succeq$`` is a
transitive relation on the space of symmetric matrices, that is,
\begin{align}
  v^T B(x) v
  & \ge v^T\left( \mathbb{I}^T\,\left( \left( A(x) \right)_{ij}
     C_{ij} \right)_{i,j=1}^{k}\,\mathbb{I} \right) v \label{eq:le} \\
  &= \left( v^T\ \ldots\ v^T \right) \left( \left( A(x) \right)_{ij}
     C_{ij} \right)_{i,j=1}^{k}\left(v\ \ldots\ v \right)^T \nonumber
  \geq 0
\end{align}
for every $v\in\R^{l}$.

Specializing ``$\succeq$'' to ``$=$'' in~\eqref{eq:containment_proof} 
and ``$\geq$`` to ``$=$`` in~\eqref{eq:le} provides a streamlined proof
for~\eqref{eq:inclusion}.
\end{proof}

For both systems~\eqref{eq:inclusion} and~\eqref{eq:inclusion_relaxed} the
feasibility depends on the linear pencil representation of the sets involved. In
Section~\ref{sec:non-exact} we will take a closer look at this fact. 

\begin{rem}
The sub-block argument in the proof of Theorem~\ref{thm:Helton_Contain} can also
be stated in terms of the Khatri-Rao product (see \cite{Liu-2000}).
Let $A = (A_{ij})_{ij}$ and $B = (B_{ij})_{ij}$ be block matrices, consisting
of $k\times k$ blocks of size $p \times p$ and $q \times q$, respectively. 
The \emph{Khatri-Rao product} of $A$ and $B$ is defined as the blockwise
Kronecker product of $A$ and $B$, i.e.,
\[
  A \ast B = \left( A_{ij} \otimes B_{ij} \right)_{ij} .
\]
If both $A$ and $B$ are positive semidefinite, then the Khatri-Rao product
$A\ast B$ is positive semidefinite as well, see \cite[Theorem 5]{Liu-2000}.

Now consider $A(x)$ and $C$ as in the proof of Theorem~\ref{thm:Helton_Contain}.
Then $p=1$ and $q=l$. Therefore, 
\[
  A(x) \ast C = \left( (A(x))_{ij} \otimes C_{ij} \right)_{i,j=1}^k
	      = \left( (A(x))_{ij} C_{ij} \right)_{i,j=1}^k
\]
is positive semidefinite.
\end{rem}

The subsequent statement shows that Theorem~\ref{thm:Helton_Contain} is
invariant under translation. Let $S_A$ be a spectrahedron defined by the linear
pencil $A(x) = A_0 + \sum_{p=1}^n x_p A_p$. To move $S_A$ by a vector $v = (v_1,
\ldots, v_n)$ we substitute $x-v$ into the pencil and get
\[
  A(x-v) = A_0 -\sum_{p=1}^n v_p A_p + \sum_{p=1}^n x_p A_p.
\]

\begin{lemma}[Translation symmetry]
The criteria \eqref{eq:inclusion} and \eqref{eq:inclusion_relaxed} are invariant
under translation. 
\end{lemma}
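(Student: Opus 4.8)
The plan is to show that the \emph{same} Choi matrix $C$ certifies the criterion before and after translation, so that feasibility of either system is preserved. First I would record the effect of translating both $S_A$ and $S_B$ by a common vector $v=(v_1,\ldots,v_n)$. Substituting $x-v$ into the pencils produces new pencils $\tilde A(x)=\tilde A_0+\sum_{p=1}^n x_p\tilde A_p$ and $\tilde B(x)=\tilde B_0+\sum_{p=1}^n x_p\tilde B_p$ with
\[
  \tilde A_0=A_0-\sum_{p=1}^n v_p A_p,\quad \tilde A_p=A_p\ (p\ge 1),
\]
and likewise $\tilde B_0=B_0-\sum_{p=1}^n v_p B_p$, $\tilde B_p=B_p$ for $p\ge 1$. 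Thus only the constant matrices change, and they change by the very linear combination of the higher-order matrices that appears in the containment equalities.

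Next I would observe that the $n$ linear \emph{equality} constraints indexed by $p=1,\ldots,n$ in both \eqref{eq:inclusion} and \eqref{eq:inclusion_relaxed} are literally unchanged, since $\tilde A_p=A_p$ and $\tilde B_p=B_p$ there. It therefore remains only to check the single constraint indexed by $p=0$. Writing $\tilde a^0_{ij}$ for the $(i,j)$-entry of $\tilde A_0$, linearity of $\sum_{i,j}(\cdot)\,C_{ij}$ gives
\[
  \sum_{i,j=1}^k \tilde a^0_{ij}\,C_{ij}
   = \sum_{i,j=1}^k a^0_{ij}\,C_{ij}
     - \sum_{p=1}^n v_p \sum_{i,j=1}^k a^p_{ij}\,C_{ij},
\]
and substituting the (unchanged) equalities $\sum_{i,j} a^p_{ij} C_{ij}=B_p$ for $p\ge 1$ shows this equals $\sum_{i,j} a^0_{ij} C_{ij}-\sum_{p=1}^n v_p B_p$.

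From here the two cases fall out immediately. For \eqref{eq:inclusion} the original $p=0$ equality $\sum_{i,j} a^0_{ij} C_{ij}=B_0$ then yields $\sum_{i,j}\tilde a^0_{ij} C_{ij}=B_0-\sum_p v_p B_p=\tilde B_0$, which is exactly the translated $p=0$ equality. For \eqref{eq:inclusion_relaxed} the same computation gives $\tilde B_0-\sum_{i,j}\tilde a^0_{ij} C_{ij}=B_0-\sum_{i,j} a^0_{ij} C_{ij}$, so the translated slack matrix \emph{coincides} with the original one and is positive semidefinite precisely when the original is. Since the constraint $C\succeq 0$ is untouched and translating by $-v$ reverses the construction, $C$ is feasible for the translated pair if and only if it is feasible for the original; hence feasibility of each criterion is invariant under translation. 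I do not expect a genuine obstacle here: the only point requiring care is the bookkeeping that the shift of the constant matrix is absorbed by the $p\ge 1$ equalities, which is exactly why the same $C$ continues to work.
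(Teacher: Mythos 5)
Your proposal is correct and follows essentially the same route as the paper: both arguments show the \emph{same} Choi matrix $C$ remains feasible after translation, by observing that only the constant matrices shift and that this shift is exactly absorbed by the unchanged equalities $B_p=\sum_{i,j}a^p_{ij}C_{ij}$ for $p\ge 1$. The paper phrases it for \eqref{eq:inclusion_relaxed} and specializes ``$\succeq$'' to ``$=$'' for \eqref{eq:inclusion}, while you treat the two cases explicitly, but the computation is identical.
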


\begin{proof}
Given linear pencils $A(x)$ and $B(x)$, let $C$ be a solution to
system~\eqref{eq:inclusion_relaxed}. Then it is also a solution for the
translated pencils $A(x-v)$ and $B(x-v)$ for any $v\in\R^n$. The translation
only has an impact on the constant matrix, we have to show 
\begin{equation}
  B_0 - \sum_{p=1}^n v_p B_p 
  - \left( \sum_{i,j=1}^{k} \left( a^0_{ij} 
  - \sum_{p=1}^n v_p a^p_{ij} \right) C_{ij} \right) \succeq 0 .
\label{eq:sym_proof}
\end{equation}
Since $B_{p} = \sum_{i,j=1}^{k} a^{p}_{ij} C_{ij}$ for all $p = 1,\ldots,n$,
\eqref{eq:sym_proof} is equivalent to  $B_0 - \sum_{i,j=1}^{k} a^0_{ij} C_{ij}
\succeq 0$, which is the condition on the constant matrices before translating.

As in the proof of Theorem~\ref{thm:Helton_Contain}, specializing ``$\succeq$''
to ``$=$'' yields a proof for~\eqref{eq:inclusion}.
\end{proof}

If $S_B$ is contained in the positive orthant, we can give a stronger version of
the criterion introduced in Theorem~\ref{thm:Helton_Contain}.

\begin{cor}\label{cor:contain-positive}
Let $A(x) \in \mathcal{S}_k[x]$ and $B(x) \in \mathcal{S}_l[x]$ be linear
pencils and let $S_A$ be contained in the positive orthant.
If the following system is feasible then $S_A\subseteq S_B$.
\begin{equation}\label{eq:inclusion_positive}
  C = \left( C_{ij} \right)_{i,j=1}^{k}\succeq 0, \quad
  B_0 - \sum_{i,j=1}^{k} a^0_{ij} C_{ij} \succeq 0, \quad 
  \forall p = 1,\ldots,n:\ B_p - \sum_{i,j=1}^{k} a^p_{ij} C_{ij} \succeq 0 .
\end{equation}
\end{cor}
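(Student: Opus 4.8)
The plan is to reuse the proof of Theorem~\ref{thm:Helton_Contain} essentially verbatim, the only new ingredient being the positive-orthant hypothesis, which lets me weaken the linear \emph{equalities} on $B_1,\dots,B_n$ into the one-sided \emph{inequalities} of~\eqref{eq:inclusion_positive}. The reason this is legitimate is that on $S_A$ every coordinate $x_p$ is nonnegative, so a positive semidefinite slack in each linear constraint, once multiplied by $x_p \geq 0$, can only add a positive semidefinite term and hence cannot spoil the final positive semidefiniteness.

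Concretely, I would introduce the slack matrices $D_p := B_p - \sum_{i,j=1}^k a^p_{ij} C_{ij}$ for $p=0,\dots,n$, which are positive semidefinite by the semidefinite constraints of~\eqref{eq:inclusion_positive}. For $x\in S_A$ I would then rewrite
\[
  B(x) \ = \ \sum_{i,j=1}^k (A(x))_{ij}\, C_{ij} \ + \ D_0 + \sum_{p=1}^n x_p D_p,
\]
and observe that, since $x_p\geq 0$ for all $p$ by the positive-orthant assumption, the trailing sum $D_0+\sum_{p=1}^n x_p D_p$ is a nonnegative combination of positive semidefinite matrices, hence $\succeq 0$. This reproduces exactly the inequality~\eqref{eq:containment_proof}, namely $B(x)\succeq \sum_{i,j=1}^k(A(x))_{ij}C_{ij}$.

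From that point the remainder of the proof of Theorem~\ref{thm:Helton_Contain} applies without change: $A(x)\otimes C\succeq 0$ as the Kronecker product of positive semidefinite matrices, its principal subblock $\big((A(x))_{ij}C_{ij}\big)_{i,j=1}^k$, which is the Khatri-Rao product $A(x)\ast C$, is positive semidefinite, and the transitivity step~\eqref{eq:le} with $\mathbb{I}=[I_l,\dots,I_l]^T$ then gives $v^T B(x)v\geq 0$ for all $v\in\R^l$, so $x\in S_B$.

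I do not anticipate a genuine obstacle; the only subtlety worth flagging is the bookkeeping about \emph{which} set must lie in the positive orthant. It must be the inner set $S_A$ (so that the points $x$ at which feasibility is tested satisfy $x_p\geq 0$), not $S_B$. Finally, specializing the slacks $D_p$ with $p\geq 1$ to zero recovers the equality constraints, so that~\eqref{eq:inclusion_positive} is indeed a genuine relaxation of~\eqref{eq:inclusion_relaxed} under the extra hypothesis.
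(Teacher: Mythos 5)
Your proof is correct and is essentially the paper's own argument: the paper likewise derives the key inequality $B(x)\succeq\sum_{i,j=1}^{k}(A(x))_{ij}C_{ij}$ from the positive semidefinite slacks together with $x\geq 0$ on $S_A$, and then invokes the remainder of the proof of Theorem~\ref{thm:Helton_Contain} unchanged; your explicit slack matrices $D_p$ are only a notational variant of the paper's direct inequality chain. Your side remark is also on point: the hypothesis must indeed fall on the inner set $S_A$ (as in the corollary's statement and proof), notwithstanding the paper's introductory sentence which mistakenly says $S_B$.
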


\begin{proof}
The proof is along the lines of the proof of Theorem~\ref{thm:Helton_Contain}.
Indeed, since $S_A$ lies in the positive orthant, we have $x\geq 0$ for all
$x\in S_A$ and hence, 
\begin{align*}
  B(x) &= B_0 + \sum_{p=1}^n x_{p} B_{p}\ 
  \succeq \ \sum_{i,j=1}^{k} \, a^{0}_{ij}\, C_{ij}
    + \sum_{p=1}^n \sum_{i,j=1}^{k} x_{p}\, a^{p}_{ij}\, C_{ij} 
   \ =\ \sum_{i,j=1}^{k} \left(A(x)\right)_{ij} C_{ij} \, .
\end{align*}
\end{proof}

By relaxing system~\eqref{eq:inclusion} to \eqref{eq:inclusion_positive} the
number of scalar variables remains $\frac{1}{2} kl(kl+1)$, whereas the
$\frac{1}{2}(n+1)l(l+1)$ linear constraints are replaced by $n+1$ semidefinite
constraints of size $l\times l$.

If containment restricted to the positive orthant implies containment everywhere
else, criterion~\eqref{eq:inclusion_positive} can be applied, even if the
spectrahedron is not completely contained in the positive orthant. To make use
of this fact, we have to premise a certain structure of the spectrahedra.
We give an example in the following corollary.

\begin{cor} \label{cor:contain_symmetry}
Let $A(x) \in \mathcal{S}_k[x]$ and $B(x) \in \mathcal{S}_l[x]$ be linear
pencils defining spectrahedra with a reflection symmetry with respect to all
coordinate hyperplanes. If system~\eqref{eq:inclusion_positive} is feasible then
$S_A\subseteq S_B$.
\end{cor}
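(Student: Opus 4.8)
The plan is to reduce to Corollary~\ref{cor:contain-positive} by exploiting the symmetry to fold every point of $S_A$ into the positive orthant and then unfold the resulting image back. The crucial observation is that the proof of Corollary~\ref{cor:contain-positive} is in fact \emph{pointwise}: it uses only that a given point $x$ satisfies $x \geq 0$, not that the entire spectrahedron lies in the positive orthant. Indeed, if $C$ is feasible for~\eqref{eq:inclusion_positive}, then for any $x \in \R^n$ with $x \geq 0$ one has
\[
  B(x) - \sum_{i,j=1}^k (A(x))_{ij} C_{ij}
  = \Bigl( B_0 - \sum_{i,j=1}^k a^0_{ij} C_{ij} \Bigr)
    + \sum_{p=1}^n x_p \Bigl( B_p - \sum_{i,j=1}^k a^p_{ij} C_{ij} \Bigr)
  \succeq 0,
\]
since each summand is a nonnegative multiple of a positive semidefinite matrix. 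Combining this with the Kronecker-product and transitivity argument from the proof of Theorem~\ref{thm:Helton_Contain} shows that $A(x) \succeq 0$ forces $B(x) \succeq 0$; that is, for every $x \in S_A$ lying in the positive orthant one already has $x \in S_B$.

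For a sign vector $\epsilon \in \{-1,+1\}^n$ I write $R_\epsilon(x) = (\epsilon_1 x_1, \ldots, \epsilon_n x_n)$ for the corresponding reflection. Reflection symmetry of $S_A$ and $S_B$ with respect to all coordinate hyperplanes means precisely that $R_\epsilon(S_A) = S_A$ and $R_\epsilon(S_B) = S_B$ for every such $\epsilon$. I then take an arbitrary $x \in S_A$ and choose $\epsilon$ with $\epsilon_i = 1$ when $x_i \geq 0$ and $\epsilon_i = -1$ otherwise, so that $R_\epsilon(x) = (|x_1|, \ldots, |x_n|)$ lies in the positive orthant. By the symmetry of $S_A$ this folded point $R_\epsilon(x)$ still belongs to $S_A$, and by the pointwise argument of the first paragraph it therefore belongs to $S_B$.

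Finally, applying the same reflection $R_\epsilon$ once more (an involution, $R_\epsilon \circ R_\epsilon = \mathrm{id}$) unfolds $R_\epsilon(x)$ back to $x$; since $S_B$ is invariant under $R_\epsilon$, this yields $x \in S_B$. As $x \in S_A$ was arbitrary, I conclude $S_A \subseteq S_B$. The only point requiring care is isolating the fact that Corollary~\ref{cor:contain-positive} needs merely a \emph{single} point to lie in the positive orthant rather than the whole spectrahedron; once this is made explicit, the folding and unfolding via the common reflection symmetry is routine.
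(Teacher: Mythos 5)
Your proposal is correct and follows essentially the same route the paper intends: the paper states this corollary without a written proof, but the paragraph preceding it announces exactly your principle, namely that containment restricted to the positive orthant (which is what the pointwise argument in the proof of Corollary~\ref{cor:contain-positive} certifies, since it only uses $x \geq 0$ for the individual point at hand) extends to all of $S_A$ via the reflection symmetry. Your explicit folding of an arbitrary $x \in S_A$ into the positive orthant by a sign flip $R_\epsilon$, applying the pointwise conclusion there, and unfolding back using the invariance of $S_B$ under $R_\epsilon$ is precisely the omitted argument.
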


In Section~\ref{sec:non-exact} we will see that the relaxed
version~\eqref{eq:inclusion_positive} is strictly stronger than
system~\eqref{eq:inclusion}. There are cases, where a solution to the relaxed
problem~\eqref{eq:inclusion_positive} exists, even though the original
problem~\eqref{eq:inclusion} is infeasible.

\subsection{Exact cases\label{se:exactcases}}

It turns out that the sufficient semidefinite criteria~\eqref{eq:inclusion}
and~\eqref{eq:inclusion_relaxed} even provide exact containment
characterizations in several important cases. 
Detailed statements of these results and their proofs will be given in
Statements~\ref{lem:polytope_inclusion_neccessary},
\ref{lem:ellipsoid_inclusion}, \ref{lem:ball-polytope_inclusion},
\ref{thm:spec_polytope}.

For ease of notation, most statements in this section as well as in
section~\ref{sec:spectrahedron-polytope-inclusion} are given for monic pencils
and proved only for criterion~\eqref{eq:inclusion}. Note however, that
feasibility of~\eqref{eq:inclusion} implies feasibility
of~\eqref{eq:inclusion_relaxed}. Furthermore, after translating the (in this
section mostly bounded) spectrahedra to the positive orthant,
Corollary~\ref{cor:contain-positive} can be applied. Since
criterion~\eqref{eq:inclusion} is invariant under translation, its feasibility
again implies that system~\eqref{eq:inclusion_positive} has a solution for the 
translated spectrahedra.

Besides the normal forms of polyhedra, ellipsoids, and balls introduced in
Section~\ref{se:prelim}, the exact characterizations will also use the following
\emph{extended form} $S_{\widehat{A}}$ of a spectrahedron~$S_A$. Given a linear
pencil $A(x)\in\mathcal{S}_k[x]$, we call the linear pencil with an additional
1 on the diagonal
\begin{equation}
  \widehat{A}(x) := \begin{bmatrix}
              1 & 0 \\
	      0 & A(x)
            \end{bmatrix} \in \mathcal{S}_{k+1}[x]
\label{eq:extended_spec}
\end{equation}
the {\it extended linear pencil} of $S_A = S_{\widehat{A}}$ (the spectrahedra
coincide, only the representations of $S_A$ and $S_{\widehat{A}}$ differ, since
the 1 we add for technical reasons is redundant). The entries of $\widehat{A}_p$ 
in the pencil $\widehat{A}(x) = \widehat{A}_0 + \sum_{p=1}^n x_p \widehat{A}_p$
are denoted by $\widehat{a}^{\,p}_{ij}$ for $i,j=0,\ldots,k$, as usual.

\begin{thm} \label{thm:summaryexactness}
Let $A(x) \in \mathcal{S}_k[x]$ and $B(x) \in \mathcal{S}_l[x]$ be monic linear
pencils.
In the following cases the criteria~\eqref{eq:inclusion} as well
as~\eqref{eq:inclusion_relaxed} are necessary and sufficient for the inclusion
$S_A \subseteq S_B$:
\begin{enumerate}
  \item if $A(x)$ and $B(x)$ are normal forms of ellipsoids
    (both centrally symmetric, axis-aligned semi-axes),
  \item if $A(x)$ and $B(x)$ are normal forms of a ball 
    and an $\HH$-polyhedron, respectively, 
  \item if $B(x)$ is the normal form of a polytope,
  \item if $\widehat{A}(x)$ is the extended form of a spectrahedron
    and $B(x)$ is the normal form of a polyhedron.
\end{enumerate}
\end{thm}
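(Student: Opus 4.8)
The plan is to establish, in each of the four cases, only the implication $S_A \subseteq S_B \Rightarrow$ feasibility of~\eqref{eq:inclusion}; together with Theorem~\ref{thm:Helton_Contain} this settles both criteria simultaneously, since feasibility of~\eqref{eq:inclusion} forces feasibility of the relaxation~\eqref{eq:inclusion_relaxed}, which in turn forces $S_A \subseteq S_B$, closing the cycle of implications. Throughout, monicity of $A$ gives the Slater point $x=0$ (where $A(0)=I_k \succ 0$), which is what makes the semidefinite duality below behave well, and by translation symmetry one may position the sets conveniently.

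The engine for the polyhedral-outer cases (3) and (4) is a \emph{representation lemma}: for monic $A(x)$, every affine $\ell$ nonnegative on $S_A$ should be written as $\ell(x)=\langle Y, A(x)\rangle$ with $Y \succeq 0$. The cone $K=\{\langle Y, A(\cdot)\rangle : Y\succeq 0\}$ is closed, because $\operatorname{tr} Y=\langle Y, A_0\rangle$ is one of the coordinates of the coefficient map and hence no nonzero $Y\succeq 0$ lies in its kernel; that $K$ coincides with the cone of affine functions nonnegative on $S_A$ then follows from conic duality under Slater, \emph{provided $S_A$ is bounded}. Granting the lemma, I would apply it to each diagonal entry $b_m(x)=(\mathds{1}_l + Bx)_m$ of the polyhedron's normal form, nonnegative on $S_A$ precisely because $S_A\subseteq S_B$, obtaining $Y^{(1)},\dots,Y^{(l)}\succeq 0$ with $(B_p)_{mm}=\sum_{i,j} a^p_{ij} Y^{(m)}_{ij}$. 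Setting $C_{ij}:=\diag(Y^{(1)}_{ij},\dots,Y^{(l)}_{ij})$ gives $\sum_{i,j} a^p_{ij} C_{ij}=B_p$, while a permutation identifies $C$ with $\diag(Y^{(1)},\dots,Y^{(l)})\succeq 0$; evaluating at $x=0$ forces $\operatorname{tr} Y^{(m)}=b_m(0)=1$, so the constant condition $\sum_i C_{ii}=I_l$ holds automatically. This is case (3) (Statement~\ref{lem:polytope_inclusion_neccessary}), where boundedness of the outer polytope $P_B$ guarantees that $S_A$ is bounded.

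The main obstacle is case (4) (Statement~\ref{thm:spec_polytope}), where $S_A$ and the polyhedron $S_B$ may be unbounded and the representation lemma fails in its exact form (for unbounded $S_A$ one has $K\subsetneq N$; e.g.\ the constant $1$ need not be representable). The remedy is exactly the extended pencil $\widehat A(x)$ of~\eqref{eq:extended_spec}: its extra diagonal $1$ supplies the matrix unit $E_{00}$, which is positive semidefinite, orthogonal to all $\widehat A_p$ with $p\ge 1$, and satisfies $\langle E_{00}, \widehat A(x)\rangle\equiv 1$. Thus each nonnegative $b_m$ decomposes as $b_m(x)=\langle Y^{(m)}, A(x)\rangle + s_m$ with $Y^{(m)}\succeq 0$ and slack $s_m\ge 0$ left over from duality, and $\widehat Y^{(m)}:=\diag(s_m, Y^{(m)})\succeq 0$ represents $b_m$ exactly against $\widehat A$. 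The same block-diagonal assembly then produces a Choi matrix feasible for $(\widehat A, B)$; the transitivity and block-diagonalization properties of the criterion, combined with SDP duality, are what turn this into a proof and explain why the extended form is the precise device absorbing the slack.

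For the two round cases I would argue more explicitly, using the arrowhead shape of~\eqref{eq:ellipsoid}. In case (1) (Statement~\ref{lem:ellipsoid_inclusion}) containment of two centered axis-aligned ellipsoids reduces to the coordinatewise inequalities $a_p\le b_p$ among the semi-axes, from which a Choi matrix $C$ can be written in closed form and checked to be positive semidefinite by hand, needing no duality. Case (2) (Statement~\ref{lem:ball-polytope_inclusion}) is the instance of the polyhedral engine where $A$ is the ball's pencil: each facet function is a single affine function nonnegative on a ball, and its exact representation $b_m=\langle Y^{(m)}, A\rangle$ is the tight S-procedure for one sphere constraint, again giving $Y^{(m)}\succeq 0$ explicitly before assembly. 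The surprising feature running through (3) and (4) is that, although feasibility of~\eqref{eq:inclusion} a priori depends on the chosen pencil representation of $S_A$, for polyhedral $S_B$ it nevertheless characterizes containment exactly — this is where the representation lemma, the extended-form slack absorption, and the block-diagonalization must all cooperate.
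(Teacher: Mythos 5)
Your route for cases (2)--(4) is correct but genuinely different from the paper's. The paper proves (3)--(4) (Theorem~\ref{thm:spec_polytope}) by splitting the polyhedral pencil $B(x)$ into $1\times 1$ blocks via Proposition~\ref{prop:direct_sum}, running SDP duality on each facet, and then --- because the dual solution only has $\operatorname{tr}(Y^q)=r_q\le 1$ rather than trace exactly $1$ --- taking a detour through a \emph{scaled} polyhedron $S_{B'}$, which is patched back to $S_B$ using exactness for polyhedron-in-polyhedron (Lemma~\ref{lem:polytope_inclusion_neccessary}) and transitivity (Theorem~\ref{thm:Helton_transitive}); case (2) is done by an explicit Choi matrix (Lemma~\ref{lem:ball-polytope_inclusion}). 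You instead upgrade the per-facet duality to an exact representation lemma: the cone $K=\{\langle Y,A(\cdot)\rangle : Y\succeq 0\}$ is closed (your argument via $\langle Y,A_0\rangle=\operatorname{tr}Y$ killing the kernel on the PSD cone is the right one), and for bounded $S_A$ a separation/biduality argument shows $K$ equals the cone of affine functions nonnegative on $S_A$ --- boundedness is exactly what excludes separating functionals with $\lambda_0\le 0$, i.e.\ recession directions. This gives $b_m=\langle Y^{(m)},A(\cdot)\rangle$ with $\operatorname{tr}Y^{(m)}=b_m(0)=1$ on the nose, and the block-diagonal assembly $C_{ij}=\diag(Y^{(1)}_{ij},\dots,Y^{(l)}_{ij})$ (the easy direction of Proposition~\ref{prop:direct_sum}) is then immediately feasible for \eqref{eq:inclusion}; in the unbounded case the duality slack $s_m=1-\operatorname{tr}Y^{(m)}\ge 0$ is absorbed by the $E_{00}$ entry of $\widehat A$, which is precisely why the extended form is needed. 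This avoids the scaling detour, transitivity, and Lemma~\ref{lem:polytope_inclusion_neccessary} altogether, subsumes case (2) under the same engine (the ball is bounded, so no extended form is needed, matching the statement), and sidesteps a small wrinkle in the paper's proof (division by $r_q$). It is essentially the linear Positivstellensatz route via Klep--Schweighofer that the paper attributes to a referee remark, here made self-contained.

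The one genuine gap is case (1). Your reduction of ellipsoid-in-ellipsoid containment to the coordinatewise inequalities $a_p\le b_p$ is correct, but since the outer set is not polyhedral your representation-lemma engine does not apply, and the claim that a feasible Choi matrix ``can be written in closed form and checked to be positive semidefinite by hand'' is exactly the content you have not supplied. That construction is the entire substance of the paper's Lemma~\ref{lem:ellipsoid_inclusion}: one must exhibit the matrix with entries built from the ratios $a_i a_j/(b_i b_j)$ and verify positive semidefiniteness by a sum-of-squares computation using $a_p \le b_p$ --- it is not a routine verification that can be waved through, and it does not follow from your other cases (e.g.\ by diagonal coordinate changes one only reduces to ball-in-ellipsoid, which still requires an explicit certificate). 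Until that matrix is produced, case (1) remains unproved in your write-up.
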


In this section, we provide the proofs of (1), (2), where the sufficiency parts
follow by Theorem~\ref{thm:Helton_Contain}. The cases~(3) and~(4) will be
treated in Section~\ref{sec:spectrahedron-polytope-inclusion}.
We start with the containment of $\HH$-polyhedra in $\HH$-polyhedra which
slightly generalizes Proposition~\ref{prop:polytope_inclusion_sufficient} and
will be used in the proofs of later statements.

\begin{lemma} \label{lem:polytope_inclusion_neccessary} 
Let $A(x)\in\mathcal{S}_k[x]$ be the normal form of a polyhedron as defined in
\eqref{eq:polytope} and let 
$\widehat{A}(x) = \diag(1, a_1(x), \ldots, a_k(x)) \in\mathcal{S}_{k+1}[x]$
be the extended linear pencil of $A(x)$. 
Let $B(x)\in\mathcal{S}_l[x]$ be the normal form of a polyhedron.
When applied to the pencils $\widehat{A}(x)$ and $B(x)$
criterion~\eqref{eq:inclusion} is necessary and sufficient for the inclusion
$S_{\widehat{A}} = S_A \subseteq S_B$. 
If $S_A$ is a polytope, i.e. a bounded polyhedron, the pencil $A(x)$ can be used
instead of $\widehat{A}(x)$.
\end{lemma}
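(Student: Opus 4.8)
The plan is to dispatch sufficiency in one line and then invest all the work in necessity, via the affine Farkas Lemma together with an explicit block-diagonal Choi matrix. Sufficiency is free: if \eqref{eq:inclusion} is feasible for the pencils $\widehat A(x)$ and $B(x)$, then Theorem~\ref{thm:Helton_Contain} gives $S_{\widehat A}\subseteq S_B$, and $S_{\widehat A}=S_A$ by the definition of the extended form \eqref{eq:extended_spec}. So the entire content is the converse direction.

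For necessity I would first record that both pencils are monic, so $0\in S_A$ and $0\in S_B$; in particular $P_A:=S_A$ is a \emph{nonempty} polyhedron and Proposition~\ref{prop:farkas_aff} applies without an empty-set caveat. Write $a_j(x)=1+(Ax)_j$ for $j=1,\ldots,k$ and $b_i(x)=1+(Bx)_i$ for $i=1,\ldots,l$ for the facet-defining affine functions. Since $S_A\subseteq S_B$, each $b_i$ is nonnegative on $P_A$, so Proposition~\ref{prop:farkas_aff} yields nonnegative coefficients $c_{i0},c_{i1},\ldots,c_{ik}$ with $b_i(x)=c_{i0}+\sum_{j=1}^k c_{ij}\,a_j(x)$. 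The crucial bookkeeping move is to read the Farkas constant $c_{i0}$ as the coefficient of the \emph{constant} affine function $\widehat a_0(x)\equiv 1$ that the extended pencil \eqref{eq:extended_spec} supplies: comparing constant terms gives $\sum_{j=0}^k c_{ij}=1$, and comparing the coefficient of each $x_p$ gives $B_{ip}=\sum_{j=1}^k c_{ij}A_{jp}$.

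I would then build the Choi matrix as a block-diagonal matrix, with blocks indexed by $0,1,\ldots,k$ exactly as in $\widehat A(x)$: set $C_{jj}=\diag(c_{1j},\ldots,c_{lj})\in\R^{l\times l}$ and let all off-diagonal blocks vanish. Each $C_{jj}$ is a nonnegative diagonal matrix, so $C=\diag(C_{00},\ldots,C_{kk})\succeq 0$. Because $\widehat A$ is diagonal, only these diagonal blocks enter the linear constraints of \eqref{eq:inclusion}, and the relevant entries are $\widehat a^0_{jj}=1$ for all $j$, $\widehat a^p_{00}=0$, and $\widehat a^p_{jj}=A_{jp}$ for $j\ge 1$, $p\ge 1$. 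Hence the $p=0$ constraint reads $\sum_{j=0}^k C_{jj}=\diag\!\big(\sum_j c_{1j},\ldots,\sum_j c_{lj}\big)=I_l=B_0$ by the row-sum identity, while the $p\ge 1$ constraint reads $\sum_{j=1}^k A_{jp}C_{jj}=\diag(B_{1p},\ldots,B_{lp})=B_p$ by the coefficient identity. Thus $C$ solves \eqref{eq:inclusion}, proving necessity for $\widehat A(x)$.

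Finally, for the bounded case I would drop the extra row. If $S_A$ is a polytope with $0$ in its interior, Proposition~\ref{prop:polytope_inclusion_sufficient} furnishes a right stochastic $l\times k$ matrix $D$ with $B=DA$; taking $C_{jj}=\diag(D_{1j},\ldots,D_{lj})$ for $j=1,\ldots,k$ (again block-diagonal) gives a feasible solution of \eqref{eq:inclusion} for $A(x)$ itself, since row-stochasticity forces $\sum_{j=1}^k C_{jj}=I_l$ \emph{exactly}. The one subtle point, and the place where the extended form is genuinely required, is precisely this last distinction: for an unbounded polyhedron the polar-polytope correction used in the proof of Proposition~\ref{prop:polytope_inclusion_sufficient} is unavailable, so the Farkas slack $c_{i0}$ cannot be absorbed into the $a_j$ and must instead be parked in the extra diagonal block $C_{00}$ — which is exactly what the relaxed identity $I_l=C_{00}+\sum_{i=1}^k C_{ii}$ permits. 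I expect this to be the only step requiring real care; the rest is the routine diagonal-block verification above.
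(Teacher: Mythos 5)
Your proposal is correct and follows essentially the same route as the paper's own proof: apply the affine Farkas Lemma to each facet inequality $b_i$, park the constant $c_{i0}$ in the extra $1$-block of $\widehat A(x)$, and assemble the nonnegative coefficients into a diagonal Choi matrix $C$ with $(C_{jj})_{ii}=c_{ij}$, handling the polytope case by absorbing the constant into a convex combination of the $a_j$ (which is exactly what your appeal to Proposition~\ref{prop:polytope_inclusion_sufficient} does internally). The verification of the linear constraints and the positive semidefiniteness argument coincide with the paper's, so no further comment is needed.
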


\begin{proof} 
With regard to~\eqref{eq:polytope}, the polyhedra $S_A$ and $S_B$ are of the
form $S_A = \{ x \in \R^n \, : \, \mathds{1}_k + Ax \ge 0\}$ and 
$S_B = \{ x \in \R^n \, : \, \mathds{1}_l + Bx \ge 0\}$, respectively, and let 
$\widehat{A}$ be the $(k+1) \times n$ matrix defined by $\widehat{A} := $
\begin{footnotesize}$\Big[ \begin{array}{c} 0 \\ A \end{array} \Big]$
\end{footnotesize}.

If $S_{\widehat{A}} \subseteq S_B$, then there exist convex combinations
$(\mathds{1}_l + Bx)_i = c_{i0} + \sum_{j=1}^{k} c_{ij} (\mathds{1}_{k} + Ax)_j
= \sum_{j=0}^{k} c_{ij} (\mathds{1}_{k+1} + \widehat{A} x)_j,$ where
$(\mathds{1}_{k+1} + \widehat{A}x)_0 = 1$
and the coefficients $c_{ij}$ are nonnegative, just as in the proof of
Proposition~\ref{prop:polytope_inclusion_sufficient}.

Now we construct a matrix $C$ that is a solution to system~\eqref{eq:inclusion}.
Recall that $C$ consists of matrices of size 
$l \times l:\ C = (C_{st})_{s,t=0}^k$. 
Set the $i$-th diagonal entry of $C_{jj}$ to be $c_{ij}$, and choose all other
entries to be zero. The resulting matrix is a diagonal matrix with non-negative
entries, which makes it positive semidefinite.

Clearly, 
$ B(x) = \sum_{j=0}^k (\mathds{1}_{k+1} + \widehat{A}x)_j \, C_{jj}
  = \sum_{i,j=0}^k (\mathds{1}_{k+1} + \widehat{A}x)_j \, C_{ij}$. 
Comparing coefficients, we see that $ I_l = \sum_{j=0}^k C_{jj}$ and 
$B_p = \sum_{i,j=0}^k \widehat{a}^{\, p}_{ij} C_{ij}$ 
for all $p \in \{1,\ldots,n\}$.

If the inner polyhedron $S_A$ is a polytope, the constant $1$ is a convex
combination of the remaining polynomials $a_1(x), \ldots, a_k(x)$. Thus the
additional $1$ in the upper left entry of pencil $\widehat{A}(x)$ is not needed.
\end{proof}

As we have seen in the proof of Lemma~\ref{lem:polytope_inclusion_neccessary}, 
for polyhedra there is a diagonal solution to \eqref{eq:inclusion}. Thus it is
sufficient to check the feasibility of the restriction of \eqref{eq:inclusion}
to the diagonal and checking inclusion of polyhedra reduces to a linear program.

\begin{rem}
For unbounded polyhedra, the extended normal form is required in order for the
criterion to be exact. Without it, already in the simple case of two half spaces
defined by two parallel hyperplanes, system~\eqref{eq:inclusion} is not
feasible.
\end{rem}

The following statement on ellipsoids uses the normal form~\eqref{eq:ellipsoid}.

\begin{lemma}\label{lem:ellipsoid_inclusion}
Let two ellipsoids $S_A$ and $S_B$ be centered at the origin with semi-axes
parallel to the coordinate axes, given by the normal forms
\[
  A(x) = I_{n+1}
       + \sum_{p=1}^{n} \frac{x_{p}}{a_p} (E_{p,n+1}+E_{n+1,p}) 
  \text{ and }
  B(x) = I_{n+1}
       + \sum_{p=1}^{n} \frac{x_{p}}{b_p} (E_{p,n+1}+E_{n+1,p}) \, ,
\]
respectively. Here $\left( a_{1},\ldots,a_{n} \right) > 0$ and 
$\left( b_{1},\ldots,b_{n} \right) > 0$ are the vectors of the lengths of the 
semi-axes. Then system~\eqref{eq:inclusion} is necessary and sufficient for the 
inclusion $S_A \subseteq S_B$.
\end{lemma}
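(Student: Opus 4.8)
The plan is to separate the geometry from the algebra: identify the two spectrahedra as ellipsoids, reduce the containment $S_A\subseteq S_B$ to the inequalities $a_p\le b_p$, obtain sufficiency of \eqref{eq:inclusion} from Theorem~\ref{thm:Helton_Contain}, and establish necessity by an explicit, elementary construction of the Choi matrix $C$.

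First I would compute $S_A$ and $S_B$ explicitly. Writing $A(x)=\left[\begin{smallmatrix} I_n & v \\ v^T & 1\end{smallmatrix}\right]$ with $v=(x_1/a_1,\dots,x_n/a_n)^T$, the Schur complement with respect to the leading block $I_n$ gives $A(x)\succeq0\iff \sum_{p=1}^n x_p^2/a_p^2\le1$, so $S_A$ is exactly the axis-aligned ellipsoid with semi-axes $a_p$, and likewise $S_B$. Inserting the boundary point $a_q e_q\in S_A$ into the defining inequality of $S_B$ shows that $S_A\subseteq S_B$ forces $a_q^2/b_q^2\le1$, hence $a_p\le b_p$ for every $p$; the converse containment is immediate from comparing the two inequalities. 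Thus the geometric content of $S_A\subseteq S_B$ is precisely $a_p\le b_p$ for all $p$, and sufficiency of \eqref{eq:inclusion} is already covered by Theorem~\ref{thm:Helton_Contain}.

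The substance of the proof is the necessity direction: assuming $a_p\le b_p$, I would produce a feasible $C=(C_{ij})_{i,j=1}^{n+1}$. Writing the constraints out, the $p=0$ equation is $\sum_{i=1}^{n+1}C_{ii}=I_{n+1}$, while for $p=1,\dots,n$ the only nonzero coefficients $a^p_{ij}$ sit in positions $(p,n+1)$ and $(n+1,p)$, so the equation reduces to $\tfrac{1}{a_p}(C_{p,n+1}+C_{n+1,p})=\tfrac{1}{b_p}(E_{p,n+1}+E_{n+1,p})$. This sparse pattern suggests seeking $C$ supported on elementary matrices, i.e. $C_{ij}=g_{ij}E_{ij}$ for a scalar symmetric matrix $G=(g_{ij})_{i,j=1}^{n+1}$. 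Taking the two vectors $u=(a_1/b_1,\dots,a_n/b_n,1)$ and $w=\bigl(\sqrt{1-a_1^2/b_1^2},\dots,\sqrt{1-a_n^2/b_n^2},0\bigr)$, which are real exactly because $a_p\le b_p$, and setting $g_{ij}=u_iu_j+w_iw_j$, the normalization $g_{ii}=1$ yields $\sum_i C_{ii}=I_{n+1}$, while $g_{p,n+1}=a_p/b_p$ yields the remaining $n$ equations once the factor $1/a_p$ is absorbed.

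The only step requiring a genuine argument, and the one I expect to be the crux, is positive semidefiniteness of $C$. Here I would rewrite $C=\sum_{i,j}E_{ij}\otimes C_{ij}=\sum_{i,j}g_{ij}\,(e_i\otimes e_i)(e_j\otimes e_j)^T$, using $E_{ij}\otimes E_{ij}=(e_i\otimes e_i)(e_j\otimes e_j)^T$. Since the vectors $e_i\otimes e_i$ form an orthonormal family, testing against an arbitrary $z$ gives $z^TCz=y^TGy$ with $y_i=(e_i\otimes e_i)^Tz$, so $C\succeq0$ follows at once from $G=uu^T+ww^T\succeq0$ (a rank-two Gram matrix). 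Everything else reduces to coefficient comparison. Finally, since feasibility of \eqref{eq:inclusion} implies feasibility of \eqref{eq:inclusion_relaxed}, the same $C$ settles the claim for both criteria.
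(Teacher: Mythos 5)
Your proposal is correct and follows essentially the same route as the paper: both reduce the containment to the inequalities $a_p \le b_p$ and then exhibit an explicit feasible Choi matrix supported on elementary matrices, $C_{ij} = g_{ij}E_{ij}$, with the scalar coefficient matrix $G$ positive semidefinite and satisfying $g_{ii}=1$, $g_{p,n+1}=a_p/b_p$. The only cosmetic difference is that the paper chooses $g_{ij} = \frac{a_i a_j}{b_i b_j}$ off the diagonal (so $G = uu^T + \operatorname{diag}(w_1^2,\ldots,w_n^2,0)$) and checks $C \succeq 0$ by completing squares in the variables $x_{i,i}$, whereas you take the rank-two matrix $G = uu^T + ww^T$ and verify positive semidefiniteness via the Kronecker--Gram identity $z^T C z = y^T G y$ --- two organizations of the same computation.
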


\begin{proof}
Note first that $k=l=n+1$. It is obvious that $S_A\subseteq S_B$ if and only if
$b_p - a_p \geq 0$ for every $p=1,\ldots,n$. 
The matrices underlying the matrix pencils $A(x)$ and $B(x)$ are
\[
  A_p = \frac{1}{a_{p}} (E_{p,n+1}+E_{n+1,p}) 
  \text{ and }
  B_p = \frac{1}{b_{p}} (E_{p,n+1}+E_{n+1,p})  
\]
for all $p=1,\ldots,n$. Now define an $(n+1)^2\times (n+1)^2$-block matrix $C$ by
\[
  \left(C_{i,j}\right)_{s,t} = 
\begin{cases}
  1 & i=j=s=t, \\
  \frac{a_{j}}{b_{j}} & i=s=n+1,\ j=t\leq n, \\
  \frac{a_{i}}{b_{i}} & i=s\leq n,\ j=t=n+1, \\
  \frac{a_i\, a_j}{b_i\, b_j} & i=s\leq n,\ j=t\leq n,\ i\neq j, \\
  0 & \text{ otherwise.}
\end{cases}
\]
We show that $C$ is a solution to~\eqref{eq:inclusion}. 
Decompose $x\in\mathbb{R}^{(n+1)^2}$ in blocks of length $n+1$ and write
$x_{i,j}$ for the $j$-th entry in the $i$-th block. The matrix $C$ is positive
semidefinite since
\begin{align*}
  x^T C x & = \sum_{i=1}^{n+1} x_{i,i}^2 
	    + 2 \sum_{i < j \leq n} \frac{ a_i\, a_j }{ b_i\, b_j }
	      x_{i,i} x_{j,j}
	    + 2 \sum_{i=1}^n\frac{ a_{i} }{ b_{i} }\, x_{i,i}\, x_{n+1,n+1}\\
	  & = \left( \sum_{i=1}^n \frac{a_i}{b_i}\, x_{i,i}
	    + x_{n+1,n+1} \right)^2
	    + \sum_{i=1}^n \left( 1 - \frac{a_{i}^2}{b_{i}^2} \right)
	      x_{i,i}^2 \geq 0
\end{align*}
for all $x\in\R^{(n+1)^2}$.
Clearly, the sum of the diagonal blocks is the identity matrix $I_{n+1}$. Since
every $A_p$ has only two non-zero entries, every $B_p$ is a linear combination
of only two blocks of $C$,
\[
  B_p = \frac{1}{a_p} C_{n+1,p} + \frac{1}{a_p} C_{p,n+1}.
\]
This equality is true by the definition of $C$.
\end{proof}

\begin{rem}
Using the square matrices $E_{ij}$ of size $(n+1)\times(n+1)$ introduced in
Section~\ref{se:prelim}, the matrix $C$ in the proof of 
Lemma~\ref{lem:ellipsoid_inclusion} has the form
\[
  \begin{bmatrix}
    E_{1,1} & d_{1,2} E_{1,2} & \cdots & d_{1,n} E_{1,n} & 
    \frac{a_1}{b_1} E_{1,n+1}\\
    d_{2,1} E_{2,1} & E_{2,2} & \ddots & \vdots & \vdots\\
    \vdots & \ddots & \ddots & d_{n-1,n} E_{n-1,n} & \vdots \\
    d_{n,1} E_{n,1} & \cdots & d_{n,n-1} E_{n,n-1} & E_{n,n} & 
    \frac{a_n}{b_n} E_{n,n+1} \\[+0,5ex]
    \frac{a_1}{b_1} E_{n+1,1} & \frac{a_2}{b_2} E_{n+1,2} & \cdots &
    \frac{a_n}{b_n} E_{n+1,n} & E_{n+1,n+1}
  \end{bmatrix} ,
\]
where $d_{ij} := \frac{a_i\, a_j}{b_i\, b_j}$.
\end{rem}

Now we prove exactness of the criterion for the containment of a ball in a
polyhedron.

\begin{lemma}\label{lem:ball-polytope_inclusion}
Let $S_A$ be a ball of radius $r > 0$ in normal form~\eqref{eq:ellipsoid},
and let $S_B$ be a polyhedron in normal form~\eqref{eq:polytope}.
For the containment of $S_A$ in $S_B$, system~\eqref{eq:inclusion} is necessary 
and sufficient.
\end{lemma}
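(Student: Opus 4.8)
The sufficiency direction is already furnished by Theorem~\ref{thm:Helton_Contain}, so the plan is to establish only necessity: assuming $S_A \subseteq S_B$, I will exhibit a feasible Choi matrix for~\eqref{eq:inclusion}. The first step is to make the geometry explicit. Writing $B(x) = \diag(b_1(x), \dots, b_l(x))$ with $b_m(x) = 1 + \beta_m^T x$ and $\beta_m \in \R^n$ the $m$-th row of the matrix underlying $B$, and recalling that the arrow pencil~\eqref{eq:ellipsoid} with $a_1 = \dots = a_n = r$ has $S_A = \{x : \|x\| \le r\}$, I would minimize each affine function $b_m$ over the ball. Since $\min_{\|x\| \le r}(1 + \beta_m^T x) = 1 - r\|\beta_m\|$, the containment $S_A \subseteq S_B$ is equivalent to the system of scalar inequalities $r\|\beta_m\| \le 1$ for $m = 1, \dots, l$.

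The second step is the crucial structural ansatz. Here $k = n+1$, so $C = (C_{ij})_{i,j=1}^{n+1}$ consists of $(n+1)^2$ blocks of size $l \times l$. I would search for a solution in which \emph{every} block $C_{ij}$ is a diagonal matrix. After the permutation grouping the $m$-th diagonal coordinate of all blocks, such a $C$ becomes block diagonal with $l$ blocks $\gamma^{(1)}, \dots, \gamma^{(l)}$, each of size $(n+1) \times (n+1)$ and given by $(\gamma^{(m)})_{ij} = (C_{ij})_{mm}$; hence $C \succeq 0$ if and only if every $\gamma^{(m)} \succeq 0$. Since $A_0 = I_{n+1}$ and $A_p = \frac1r(E_{p,n+1} + E_{n+1,p})$, the linear constraints of~\eqref{eq:inclusion} decouple completely across $m$: the off-diagonal entries of each $B_p$ vanish automatically, and what remains for fixed $m$ is that $\gamma^{(m)} \succeq 0$ with $\operatorname{tr}(\gamma^{(m)}) = 1$ (the $p=0$ condition $\sum_i C_{ii} = I_l$) and with prescribed last column $(\gamma^{(m)})_{p,n+1} = \frac r2 b_{mp}$ for $p = 1, \dots, n$ (the conditions $p = 1, \dots, n$, using $C_{n+1,p} = C_{p,n+1}^T = C_{p,n+1}$ on diagonal blocks). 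Thus the whole problem reduces to $l$ independent completion problems for small arrow-type matrices, one per facet.

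The final step solves each small problem and is where the geometry enters. Setting $w := \frac r2 \beta_m$, so that $\|w\| = \frac r2 \|\beta_m\| \le \frac12$ precisely by the containment condition from the first step, I would take $\gamma^{(m)}$ of the form $\begin{bmatrix} M & w \\ w^T & s \end{bmatrix}$ with $s = \frac12$ and $M = 2ww^T + \frac{1 - 4\|w\|^2}{2n} I_n$. A Schur-complement check shows $\gamma^{(m)} \succeq 0$, since its Schur complement $M - 2ww^T = \frac{1 - 4\|w\|^2}{2n} I_n$ is positive semidefinite exactly when $\|w\| \le \frac12$, while $\operatorname{tr}(\gamma^{(m)}) = 2\|w\|^2 + \frac{1-4\|w\|^2}{2} + \frac12 = 1$. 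Reassembling the $\gamma^{(m)}$ as the diagonal blocks of $C$ produces a feasible point for~\eqref{eq:inclusion}. I expect the main obstacle to be justifying that the diagonal-block ansatz is no loss of generality here — that it suffices to search within it — and then matching the trace normalization and Schur positivity of the resulting arrow matrices exactly to the facet-wise inequalities $r\|\beta_m\| \le 1$; the tangency boundary case $r\|\beta_m\| = 1$ must be checked, but the choice $s = \frac12$ keeps it feasible (then $M = 2ww^T$ and the Schur complement is $0$).
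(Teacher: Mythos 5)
Your proof is correct and takes essentially the same route as the paper: the paper's feasible $C$ likewise has all blocks diagonal, decoupling facet-wise into $(n+1)\times(n+1)$ arrow matrices with trace $1$ and prescribed last column $\tfrac{r}{2}\beta_m$, the only (cosmetic) differences being that the paper places the slack $\tfrac12\left(1-r^2\|\beta_m\|^2\right)$ entirely in the corner entry (taking the top-left block to be exactly $2ww^T$) and verifies positive semidefiniteness by an explicit sum-of-squares identity rather than a Schur complement. Also, the ``obstacle'' you flag is not one: for necessity you only need to exhibit \emph{some} feasible $C$, so the diagonal-block ansatz requires no loss-of-generality justification.
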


\begin{proof}
Note first that $k = n+1$. 
Since the normal form $B(x)$ is monic, the linear polynomials describing $S_B$ 
are of the form $b_i(x) = 1 + \sum_{p=1}^n b_{i,p} x_p$ for $i=1,\ldots,l$. 
If $S_A$ is contained in the halfspace $b_i(x) \ge 0$, we have 
$\frac{1}{r^{2}} \geq \sum_{p=1}^n b_{i,p}^2$.

We give a feasible matrix $C$ to system \eqref{eq:inclusion} to show exactness
of the criterion. In this case, $C$ is an $(n+1)l \times (n+1)l$-block matrix
defined as follows:
\[
  \left(C_{i,j}\right)_{s,t} = 
  \begin{cases}
    \frac{r^{2} b_{s,i}^2}{2} & i=j < k,\ s=t ,\\
    1- \frac{r^{2}}{2} \sum_{p=1}^n b_{s,i}^2 & i=j=k,\ s=t ,\\
    \frac{ r^{2} b_{s,i} b_{s,j} }{2} & i < k,\ j<k, \ i \neq j,\ s=t , \\
    \frac{r\, b_{s,j}}{2} & i=k,\ j<k,\ s=t , \\
    \frac{r\, b_{s,i}}{2} & j=k,\ i<k, \ s=t , \\
    0 & \text{ otherwise.}
  \end{cases}
\]
To show positive semidefiniteness of $C$, consider a vector $x \in \R^{(n+1)l}$.
Decompose $x$ into blocks of length $l$, and we write $x_{i,j}$ for the $j$-th 
entry in the $i$-th block. Now $C$ is positive semidefinite since
\begin{align*}
  x^{T} C x = \sum_{s=1}^{l} & \Bigg[ \sum_{i=j<k} x_{i,s}^2 
    \frac{r^{2} b_{s,i}^2}{2}
  + x_{k,s}^2 \left( 1- \frac{r^{2}}{2} \sum_{p=1}^n b_{s,p}^2 \right) \\
  &  + 2 \sum_{i<j<k} x_{i,s} x_{j,s} \frac{ r^{2} b_{s,i} b_{s,j} }{2}
  + 2 \sum_{i<k} x_{i,s} x_{k,s} \frac{ r\, b_{s,i} }{2} \Bigg] \\
  = \sum_{s=1}^{l} &  \Bigg[ \left( \sum_{i=j<k} x_{i,s} 
    \frac{r\, b_{s,i}}{\sqrt{2}} 
  + \frac{x_{k,s}}{\sqrt{2}} \right)^2 + \frac{1}{2} x_{k,s}^2 \left( 1
  - r^{2} \sum_{p=1}^n b_{s,p}^2 \right) \Bigg] \geq 0
\end{align*}
for all $x\in\R^{(n+1)l}$.
The term $1 - r^{2} \sum_{p=1}^n b_{s,p}^2$ is non-negative since the ball of 
radius $r$ is contained in $S_B$ and therefore
$\frac{1}{r^{2}} \geq \sum_{p=1}^n b_{s,p}^2$. 
By construction, the sum of the diagonal blocks is the identity matrix $I_l$.
Every $B_p$ is a linear combination of only two blocks of $C$,
\[
  B_p = \frac{1}{r} C_{n+1,p} + \frac{1}{r} C_{p,n+1}.
\]
\end{proof}

Observe that in Lemma~\ref{lem:polytope_inclusion_neccessary},
\ref{lem:ellipsoid_inclusion} and~\ref{lem:ball-polytope_inclusion} for
rational input, $C$ is rational as well.

\section{Block diagonalization, transitivity and
containment of spectrahedra in polytopes}
\label{sec:spectrahedron-polytope-inclusion}

In \cite[Prop. 5.3]{Helton2010} Helton, Klep and McCullough showed that the
containment criterion \eqref{eq:inclusion} is exact in an important case,
namely if $S_B$ is the cube, given by the monic linear pencil
\begin{equation} \label{eq:r-cube}
  B(x) = I_{2n} +\frac{1}{r} \sum_{p=1}^{n} x_p \left(E_{pp}-E_{n+p,n+p}\right)
\, .
\end{equation}
The goal in this section is to generalize this to all polyhedra $S_B$ given in
normal form \eqref{eq:polytope}, not only for the original criterion, but also
for the variations discussed in Corollaries~\ref{cor:contain-positive} and
\ref{cor:contain_symmetry}. 
As in Lemma~\ref{lem:polytope_inclusion_neccessary}, in case that $S_B$ is 
unbounded we have to use the extended normal form $\widehat{A}(x)$ instead of 
$A(x)$.

\begin{thm}\label{thm:spec_polytope}
Let $A(x)\in\mathcal{S}_k[x]$ be a monic linear pencil with  extended linear
pencil  $\widehat{A}(x)\in \mathcal{S}_{k+1}[x]$ as defined in
equation~\eqref{eq:extended_spec} and let $B(x)\in\mathcal{S}_l[x]$ be the
normal form of a polyhedron.
When applied to the pencils $\widehat{A}(x)$ and $B(x)$
criterion~\eqref{eq:inclusion} is necessary and sufficient for the inclusion
$S_{\widehat{A}} = S_A \subseteq S_B$. If $S_B$ is a polytope then the pencil 
$A(x)$ can be used instead of $\widehat{A}(x)$.
\end{thm}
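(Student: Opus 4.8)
The plan is to obtain sufficiency from Theorem~\ref{thm:Helton_Contain} and to concentrate on necessity. Since $B(x)$ is the normal form of a polyhedron, it is the diagonal pencil $B(x)=\diag(b_1(x),\ldots,b_l(x))$ with affine entries $b_s(x)=1+\sum_{p=1}^n b_{s,p}x_p$, and $S_A\subseteq S_B$ means exactly that each $b_s$ is nonnegative on $S_A=S_{\widehat{A}}$. My first reduction is a block-diagonalization: for necessity it suffices to produce a solution $C$ of \eqref{eq:inclusion} whose blocks $C_{ij}$ are all diagonal. After permuting indices such a $C$ splits as the direct sum over $s\in\{1,\ldots,l\}$ of $(k+1)\times(k+1)$ matrices $D^{(s)}$ with $D^{(s)}_{ij}=(C_{ij})_{ss}$, and $C\succeq 0$ becomes equivalent to $D^{(s)}\succeq 0$ for every $s$. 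The off-diagonal constraints $(B_p)_{st}=0$ for $s\neq t$ then hold automatically, while the diagonal constraints read $\langle\widehat{A}_p,D^{(s)}\rangle=b_{s,p}$ for all $p=0,\ldots,n$ (with $b_{s,0}=1$ and $\widehat{A}_0=I_{k+1}$). Thus the theorem reduces to the single-functional statement: if an affine $\ell(x)=1+\sum_p c_p x_p$ is nonnegative on $S_{\widehat{A}}$, then there is some $D\succeq 0$ with $\langle\widehat{A}_p,D\rangle=c_p$ for every $p$.

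For this crux I would use semidefinite duality in the guise of separation. Consider the image cone $K=\{(\langle\widehat{A}_0,D\rangle,\ldots,\langle\widehat{A}_n,D\rangle):D\succeq 0\}\subseteq\R^{n+1}$. The decisive point, and the reason the extended (monic) form is used, is that $K$ is closed: since $\widehat{A}_0=I_{k+1}$, the first coordinate equals $\operatorname{tr}D$, which bounds the nuclear and hence Frobenius norm of $D$ along any sequence with convergent image, so Bolzano--Weierstrass together with closedness of the positive semidefinite cone furnishes a preimage of any limit. I then claim $(1,c_1,\ldots,c_n)\in K$. If not, I separate the point from the closed convex cone $K$, obtaining $y\in\R^{n+1}$ with $\langle y,(1,c)\rangle>0$ and $\langle\sum_p y_p\widehat{A}_p,D\rangle\le 0$ for all $D\succeq 0$, i.e. $\sum_p y_p\widehat{A}_p\preceq 0$. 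Because $\widehat{A}_p=\diag(0,A_p)$ for $p\ge 1$ while $\widehat{A}_0=I_{k+1}$, the $(0,0)$-entry of $\sum_p y_p\widehat{A}_p$ is exactly $y_0$, forcing $y_0\le 0$; this is where the extra diagonal $1$ of $\widehat{A}$ earns its keep. The lower block yields $\sum_{p\ge 1}y_p A_p\preceq -y_0 I_k$.

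I then convert the separator into a violation of $\ell\ge 0$ on $S_A$. If $y_0<0$, set $t=-y_0>0$ and $x^{*}=-(y_1,\ldots,y_n)/t$; the block inequality gives $A(x^{*})=I_k-\tfrac1t\sum_{p\ge1}y_p A_p\succeq 0$, so $x^{*}\in S_A$, whereas $\langle y,(1,c)\rangle>0$ rearranges to $\ell(x^{*})<0$, a contradiction. If $y_0=0$, then $\sum_{p\ge1}(-y_p)A_p\succeq 0$, so $(-y_1,\ldots,-y_n)$ is a recession direction of $S_A$ (note $0\in S_A$ as the pencil is monic), and along the ray $-t(y_1,\ldots,y_n)$ one gets $\ell\to-\infty$, again contradicting $\ell\ge 0$. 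Hence $(1,c)\in K$, which settles the single-functional case and, through the block-diagonalization, the necessity for $\widehat{A}$.

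For the final assertion, if $S_B$ is a polytope then $S_A\subseteq S_B$ is bounded, so its recession cone $\{d:\sum_p d_p A_p\succeq 0\}$ is trivial. I repeat the separation argument with the non-extended pencil $A$ (size $k$, still monic, so $K$ is again closed). Now the $(0,0)$-trick is unavailable and a priori $y_0>0$ could occur, but boundedness excludes it: if $y_0>0$ then $\sum_{p\ge1}(-y_p)A_p\succeq y_0 I_k\succ 0$ would place a nonzero vector in the recession cone, a contradiction; the cases $y_0<0$ and $y_0=0$ are handled exactly as above (the latter again forcing the separator to vanish). Thus $A$ may replace $\widehat{A}$ in the bounded case. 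The main obstacle throughout is the crux single-functional step: establishing that the image of the positive semidefinite cone is closed and correctly reading the geometric content of the separating functional, as a point of $S_A$ versus a recession direction, which is precisely what distinguishes the extended from the non-extended form.
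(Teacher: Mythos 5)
Your proof is correct, but it reaches the conclusion by a genuinely different route than the paper. Both proofs start with the same block-diagonal reduction to one facet functional at a time (you reprove, in effect, the easy direction of Proposition~\ref{prop:direct_sum}), but the crux is handled differently. The paper solves, for each facet, the SDP $\max\{\langle -b^q,x\rangle : A(x)\succeq 0\}$, invokes Slater-based strong duality with dual attainment to get a matrix with trace equal to the optimal value $r_q$ rather than $1$, and then must repair the normalization by a detour: the resulting certificate only shows $S_A\subseteq S_{B'}$ for a shrunken polyhedron $B'$, and the proof is finished by combining the polyhedron-in-polyhedron exactness of Lemma~\ref{lem:polytope_inclusion_neccessary} with the transitivity result (Theorem~\ref{thm:Helton_transitive}); the unbounded case is then settled by an explicit padding construction with $\widehat{A}$ and $\widehat{B'}$. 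You instead prove directly that the ``moment'' cone $K=\{(\langle\widehat{A}_0,D\rangle,\ldots,\langle\widehat{A}_n,D\rangle):D\succeq 0\}$ is closed --- the trace coordinate coming from monicity bounds the Frobenius norm of preimages, which is exactly the attainment/no-gap content that the paper outsources to cited SDP duality --- and then apply conic separation, reading the separator $y$ geometrically: $y_0<0$ yields a point of $S_A$ violating the facet inequality, $y_0=0$ yields a recession direction along which the functional tends to $-\infty$, and $y_0>0$ is excluded either by the $(0,0)$-entry of the extended pencil or, in the polytope case, by triviality of the recession cone. What your approach buys is a self-contained, one-shot construction of the certificate with the exact normalization $\operatorname{tr}D=1$, making completely transparent why the extended form is needed precisely when $S_B$ is unbounded; what the paper's approach buys is modularity --- it reuses block diagonalization, transitivity, and the polyhedral lemma, all stated as results of independent interest, and avoids proving closedness by hand. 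One small point to keep in mind if you write this up: your separation step silently uses that the facet functionals have constant term $1$ (normal form) and that $0\in S_A$ via monicity, both of which you do invoke but which deserve explicit mention, since the criterion's exactness genuinely depends on these representation choices.
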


In order to prove this statement (where the sufficiency-parts are clear from
Theorem~\ref{thm:Helton_Contain}) we have to develop some auxiliary results on
the behavior of the criterion with regard to block diagonalization and
transitivity, which are also of independent interest.

As pointed out by an anonymous referee, this theorem can also be deduced from
results of Klep and Schweighofer in \cite{Klep2011}. A linear scalar-valued
polynomial is positive on a spectrahedron if and only if it is positive on the
matricial version of the spectrahedron. 

We use the following statement from \cite{Helton2010} on the block
diagonalization. As usual, for given matrices $M^1, \ldots, M^l$, we denote by
the direct sum $\bigoplus_{i=1}^l M^i$ the block matrix with diagonal blocks
$M^1, \ldots, M^l$ and zero otherwise.

\begin{prop}
\label{prop:direct_sum}
\cite[Proposition 4.2]{Helton2010}
Let $A(x) \in \mathcal{S}_k[x]$, $B(x) \in \mathcal{S}_l[x]$ and 
$D^q(x) \in \mathcal{S}_{d_q}[x]$ be linear pencils with 
$D^q (x) = D_0^q + \sum_{p=1}^n x_p D_p^q $, $q=1,\ldots,m$.

If $B(x) = \bigoplus_{q=1}^m D^q (x)$ is the direct sum with
$l = \sum_{q=1}^m d_q$, then system~\eqref{eq:inclusion} is feasible if and only
if for all $q = 1, \ldots, m$ there exists a $k d_q \times k d_q$-matrix $C^q$,
consisting of $k \times k$ blocks of size $d_q \times d_q$, such that
\begin{equation} \label{eq:b-block-inclusion}
  C^q = (C^q_{ij})_{i,j=1}^{k} \succeq 0, \quad
  \forall p = 0, \ldots, n : \ 
  D_p^q = \sum_{i,j=1}^{k} a_{ij}^p C_{ij}^q 
\end{equation}
is feasible.
\\
An analogous statement holds for criterion~\eqref{eq:inclusion_relaxed} and the
criteria discussed in Corollaries~\ref{cor:contain-positive} 
and~\ref{cor:contain_symmetry}.
\end{prop}

Since~\cite{Helton2010} does not contain a proof of this statement, we provide a
short one.

\begin{proof}
Let $C^ 1, \ldots, C^m$ be solutions to \eqref{eq:b-block-inclusion}, and set 
$C = \bigoplus_{q=1}^m C^q$. Define $C'$ as the direct sum of blocks of $C$, 
$C'_{ij} = \bigoplus_{q=1}^m C^q_{ij}$. Then $C'$ is a solution 
to~\eqref{eq:inclusion}:
$C'$ results by simultaneously permuting rows and columns of $C$ and is thus
positive semidefinite. We have 
$ B_p = \bigoplus_{q=1}^m D_p^q = \bigoplus_{q=1}^m \sum_{i,j=1}^k a_{ij}
C_{ij}^q = \sum_{i,j=1}^k a_{ij} C'_{ij}$.

Conversely, let $C'$ be a solution to~\eqref{eq:inclusion}. 
We are interested in the $m$ diagonal submatrices of each block $C'_{ij}$,
defined as follows: 
For $q \in \{1, \ldots, m \}$, let $C'^q_{ij}$ be the $d_q\times d_q$ submatrix
of $C'_{ij}$ with row and column indices 
$\{ \sum_{r=1}^{q-1}d_r+1 , \ldots, \sum_{r=1}^{q}d_r \}$.
Now the submatrix $C^q = (C'^q_{ij})_{i,j=1}^k$ consisting of the $q$-th
diagonal blocks of each matrix $C'_{ij}$ is a solution 
to~\eqref{eq:b-block-inclusion}.
$C^q$ is a principal submatrix of $C'$ and thus positive semidefinite. 
The equations in~\eqref{eq:b-block-inclusion} are a subset of the equations
in~\eqref{eq:inclusion} and remain valid.
\end{proof}

We now prove transitivity of the containment criterion. We begin with a simple
auxiliary lemma on the Kronecker products of corresponding blocks of block
matrices.

\begin{lemma} \label{lem:block-block-psd}
Let $A \succeq 0$ consist of $m \times m$ blocks of size $n_a \times n_a$ and 
$B \succeq 0$ consist of $m \times m$ blocks of size $n_b \times n_b$. 
Then $\sum_{s,t=1}^m (A_{st} \otimes B_{st}) \succeq 0$.
\end{lemma}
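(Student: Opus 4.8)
The plan is to reduce the statement to a manifest sum of outer products by combining rank-one (Gram) decompositions of the two positive semidefinite matrices with the bilinearity of the Kronecker product.

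First I would write $A = \sum_r u_r u_r^T$ with $u_r \in \R^{m n_a}$, and decompose each $u_r$ into its $m$ blocks $u_r = \big((u_r^1)^T, \ldots, (u_r^m)^T\big)^T$ with $u_r^s \in \R^{n_a}$, so that the block $A_{st} = \sum_r u_r^s (u_r^t)^T$. Analogously I would write $B = \sum_q v_q v_q^T$ with blocks $v_q^s \in \R^{n_b}$ and $B_{st} = \sum_q v_q^s (v_q^t)^T$. This turns every block of $A$ and $B$ into an explicit sum of rank-one pieces whose Kronecker products are easy to control.

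The main step is then purely algebraic: by bilinearity of $\otimes$ together with the mixed-product identity $(ab^T)\otimes(cd^T) = (a\otimes c)(b\otimes d)^T$ one obtains
\[
  A_{st}\otimes B_{st} = \sum_{r,q}(u_r^s \otimes v_q^s)(u_r^t \otimes v_q^t)^T .
\]
Summing over $s$ and $t$ and setting $x_{r,q} := \sum_{s=1}^m u_r^s \otimes v_q^s \in \R^{n_a n_b}$ collapses the double sum over $s,t$ into a single outer product, giving
\[
  \sum_{s,t=1}^m A_{st}\otimes B_{st} = \sum_{r,q} x_{r,q}\, x_{r,q}^T \succeq 0 ,
\]
which is positive semidefinite as a sum of outer products. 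The only genuine obstacle is the index bookkeeping, namely keeping the block index $s$ distinct from the decomposition index $r$ and recognizing that the $s,t$ double sum factors as $x_{r,q}x_{r,q}^T$ once the Kronecker factors are grouped in the manner above.

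A slicker route, which avoids the explicit decomposition, is to invoke the Khatri-Rao product theorem already recalled in the remark following Theorem~\ref{thm:Helton_Contain}: the blockwise Kronecker product $\left((A_{st}\otimes B_{st})\right)_{s,t=1}^m$ is positive semidefinite, and the sum of all its blocks equals $\mathbb{I}^T \left((A_{st}\otimes B_{st})\right)_{s,t}\mathbb{I}$, where $\mathbb{I} = [I_{n_a n_b},\ldots,I_{n_a n_b}]^T$ stacks $m$ copies of the identity. This is precisely the ``sum of all blocks'' device used in the proof of Theorem~\ref{thm:Helton_Contain}, and it yields positive semidefiniteness immediately.
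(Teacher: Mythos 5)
Your proposal is correct, and your primary argument takes a genuinely different route from the paper. The paper's proof assembles three standard facts: (i) the Kronecker product $A\otimes B$ of two positive semidefinite matrices is positive semidefinite, (ii) the blockwise Kronecker product $\left(A_{st}\otimes B_{st}\right)_{s,t=1}^m$ is a principal submatrix of $A\otimes B$ and hence positive semidefinite (the same sub-block argument used for Theorem~\ref{thm:Helton_Contain}, i.e.\ the Khatri--Rao fact), and (iii) summing all blocks of a positive semidefinite block matrix preserves positive semidefiniteness, via the stacking identity $v^T\bigl(\sum_{s,t} M_{st}\bigr)v = (v^T\,\ldots\,v^T)\,M\,(v^T\,\ldots\,v^T)^T$. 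Your Gram-decomposition argument instead writes $A=\sum_r u_r u_r^T$, $B=\sum_q v_q v_q^T$ and uses only bilinearity of $\otimes$ and the mixed-product rule $(ab^T)\otimes(cd^T)=(a\otimes c)(b\otimes d)^T$ to exhibit $\sum_{s,t}A_{st}\otimes B_{st}=\sum_{r,q}x_{r,q}x_{r,q}^T$ with $x_{r,q}=\sum_s u_r^s\otimes v_q^s$; the index bookkeeping checks out, and the factorization of the $s,t$ double sum into a single outer product is valid. What this buys you is self-containedness and a constructive certificate: you never need the Kronecker-PSD theorem, the principal-submatrix step, or the block-summing lemma, and in fact your computation reproves all of them (in particular the Khatri--Rao theorem) along the way. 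What the paper's route buys is brevity in context, since it reuses machinery already set up in the proof of Theorem~\ref{thm:Helton_Contain}. Your ``slicker route'' at the end is essentially the paper's own proof, with the Khatri--Rao theorem of Liu cited rather than rederived as a principal submatrix of $A\otimes B$.
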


\begin{proof}
First note that we have 
$ v^T \left( \sum_{s,t=1}^m A_{st} \right) v 
  = (v^T \ldots v^T) A  (v^T \ldots v^T)^T \geq 0 $ for all $v \in \R^{n} $, 
as in the proof of Theorem~\ref{thm:Helton_Contain}, and hence 
$\sum_{s,t=1}^m A_{st} \succeq 0$.

Since $A, B \succeq 0$, we have $A \otimes B \succeq 0$ as well.
$\left(A_{st} \otimes B_{st} \right)_{s,t=1}^m$ is a principal submatrix of this
matrix and therefore also positive semidefinite. Summing up the blocks of this
matrix and applying our initial considerations, we see that 
$\sum_{s,t=1}^m A_{st} \otimes B_{st} \succeq 0$.
\end{proof}

The criteria from Theorem~\ref{thm:Helton_Contain} are transitive in the
following sense. 

\begin{thm}[Transitivity] \label{thm:Helton_transitive}
Let $D(x)\in\mathcal{S}_d[x]$, $E(x)\in\mathcal{S}_e[x]$ and
$F(x)\in\mathcal{S}_f[x]$ be linear pencils in $n$ variables.
If criterion~\eqref{eq:inclusion}, criterion~\eqref{eq:inclusion_relaxed} or
criterion~\eqref{eq:inclusion_positive} certifies the inclusion $S_D \subseteq
S_E$ and the inclusion $S_E \subseteq S_F$, it also certifies 
$S_D \subseteq S_F$. 
\end{thm}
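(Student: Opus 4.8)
The plan is to compose the two Choi matrices certifying the individual inclusions into a single one for the outer inclusion. Write the coefficient matrices of $D,E,F$ as $D_p=(d^{p}_{ij})$, $E_p=(e^{p}_{st})$, $F_p=(f^{p}_{ab})$. Let $C=(C_{ij})_{i,j=1}^{d}$ with $e\times e$ blocks be a certificate for $S_D\subseteq S_E$ and let $\tilde C=(\tilde C_{st})_{s,t=1}^{e}$ with $f\times f$ blocks be a certificate for $S_E\subseteq S_F$; that is, both are positive semidefinite and satisfy the linear (or, in the relaxed cases, semidefinite) conditions of the criterion in question. I would then define the candidate $df\times df$ block matrix $\hat C=(\hat C_{ij})_{i,j=1}^{d}$ with $f\times f$ blocks by
\[
  \hat C_{ij} \ := \ \sum_{s,t=1}^{e}(C_{ij})_{st}\,\tilde C_{st},
\]
and verify that $\hat C$ solves the corresponding system for the pair $(D,F)$.

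For the linear part, the key is the transport identity obtained by interchanging the order of summation: for every $p$,
\[
  \sum_{i,j=1}^{d} d^{p}_{ij}\,\hat C_{ij}
  \ = \ \sum_{s,t=1}^{e}\Big(\sum_{i,j=1}^{d} d^{p}_{ij}\,(C_{ij})_{st}\Big)\tilde C_{st}
  \ = \ \sum_{s,t=1}^{e}\Big(\sum_{i,j=1}^{d} d^{p}_{ij}\, C_{ij}\Big)_{st}\tilde C_{st}.
\]
In the equality case \eqref{eq:inclusion} the inner sum equals $E_p$, so the right-hand side becomes $\sum_{s,t} e^{p}_{st}\tilde C_{st}=F_p$, giving exactly the required equations. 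For the relaxed criterion \eqref{eq:inclusion_relaxed} the slacks $Q:=E_0-\sum_{i,j} d^{0}_{ij}C_{ij}\succeq 0$ and $R:=F_0-\sum_{s,t} e^{0}_{st}\tilde C_{st}\succeq 0$ enter only the $p=0$ term, and a short computation gives
\[
  F_0-\sum_{i,j=1}^{d} d^{0}_{ij}\hat C_{ij}
  \ = \ R+\sum_{s,t=1}^{e} Q_{st}\,\tilde C_{st},
\]
with the analogous identity (one slack pair $Q^p,R^p$ per $p$) for the positive-orthant criterion \eqref{eq:inclusion_positive}. Each term $\sum_{s,t} Q_{st}\tilde C_{st}$ is positive semidefinite by Lemma~\ref{lem:block-block-psd}, applied with $Q$ viewed as an $e\times e$ block matrix of $1\times 1$ blocks and $\tilde C$ as one of $f\times f$ blocks; hence the required semidefinite conditions hold.

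The main obstacle is positive semidefiniteness of $\hat C$ itself. I would obtain it from $C\otimes\tilde C\succeq 0$ (a Kronecker product of positive semidefinite matrices, see~\eqref{eq:kronecker}) by the same block-extraction idea used in Theorem~\ref{thm:Helton_Contain} and Lemma~\ref{lem:block-block-psd}: index $C$ by pairs $(i,s)\in[d]\times[e]$ and $\tilde C$ by pairs $(s',a)\in[e]\times[f]$; the principal submatrix of $C\otimes\tilde C$ selected by $s'=s$ and $t'=t$ has entries $(C_{ij})_{st}(\tilde C_{st})_{ab}$ and is positive semidefinite, and summing over the common middle index (a congruence by a $0/1$ matrix) produces precisely $\hat C$. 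Equivalently, writing $C=\sum_r w_r w_r^{T}$ and $\tilde C=\sum_q z_q z_q^{T}$ and contracting yields an explicit representation $\hat C=\sum_{r,q}\Xi_{r,q}\Xi_{r,q}^{T}$, which makes $\hat C\succeq 0$ manifest. Either route is elementary; the only care needed is bookkeeping of the three index ranges $[d],[e],[f]$ and the fact that the outer block pattern of $\hat C$ is indexed by $[d]$ while the contraction runs over the $[e]$ indices internal to the blocks of $C$. Combining the linear verification with $\hat C\succeq 0$ shows that $\hat C$ certifies $S_D\subseteq S_F$ in each of the three criteria, which completes the proof.
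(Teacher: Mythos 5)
Your proposal is correct and follows essentially the same route as the paper's proof: the same composed certificate $\hat C_{ij}=\sum_{s,t}(C_{ij})_{st}\tilde C_{st}$, the same interchange-of-summation argument for the linear (and slack) conditions, and the same Kronecker-product/principal-submatrix/congruence argument for positive semidefiniteness, which is precisely the content of Lemma~\ref{lem:block-block-psd} that the paper invokes after a block-index permutation. The only cosmetic difference is that you extract the relevant principal submatrix of $C\otimes\tilde C$ directly (or via rank-one decompositions) rather than first permuting $C$ so that the lemma applies verbatim.
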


The transitivity statement concerning \eqref{eq:inclusion} can be interpreted
from an operator theoretic point of view. It states the well-known fact that the
composition of two completely positive maps is again completely positive. 
Our approach enables us to extend the statement to the relaxed
criteria~\eqref{eq:inclusion_relaxed} and \eqref{eq:inclusion_positive}.

\begin{proof}
We first consider the relaxed version~\eqref{eq:inclusion_positive}. 
Let $C^{DE}$ be the $de \times de$-matrix certifying the inclusion 
$S_D \subseteq S_E$ and $C^{EF}$ the $ef \times ef$-matrix certifying the
inclusion $S_E \subseteq S_F$.
$C^{DE}$ consists of $d \times d$ block matrices of size $e \times e$, $C^{EF}$
consists of $e \times e$ block matrices of size $f \times f$. 

We prove that the matrix $C^{DF}$ consisting of $d \times d$ blocks of size $f
\times f$ and defined by
\[
  C^{DF}_{ij} := \sum_{s,t=1}^e (C^{DE}_{ij})_{st} C^{EF}_{st}
\]
is a solution to system~\eqref{eq:inclusion_relaxed} for the inclusion $S_D
\subseteq S_F$.

To show $C^{DF} \succeq 0$, we start from $C^{DE} \succeq 0$ and 
$C^{EF} \succeq 0$. Define a new matrix $\tilde{C}^{DE}$ by
$(\tilde{C}^{DE}_{st})_{ij} := (C^{DE}_{ij})_{st}$, 
permuting the rows and columns of $C^{DE}$. 
Since rows and columns are permuted simultaneously, positive semidefiniteness is
preserved. 
We think of $\tilde{C}^{DE}$ as having $e\times e$ blocks of size $d\times d$. 
$C^{DF}$ now simplifies to 
$C^{DF} = \sum_{s,t=1}^e \tilde{C}^{DE}_{st} \otimes C^{EF}_{st}$. 
Using Lemma~\ref{lem:block-block-psd}, $C^{DF} \succeq 0$ follows.

Next we show $F_p - \sum_{i,j=1}^{d} d^p_{ij} C^{DF}_{ij} \succeq 0$ for
$p=0,\ldots,n$. By assumption,
\begin{align}
  F_p - \sum_{i,j=1}^{e} e^p_{ij} C^{EF}_{ij} = G^{EF} \succeq 0 \text{ and }
  E_p - \sum_{i,j=1}^{d} d^p_{ij} C^{DE}_{ij} = G^{DE} \succeq 0 .
\label{eq:assump2}
\end{align}
By definition of $C^{DF}$ and the right equation of~\eqref{eq:assump2}, we have
\[
  \sum_{i,j=1}^d d^p_{ij} C^{DF}_{ij} 
  = \sum_{i,j=1}^{d} d^p_{ij} \sum_{s,t=1}^e \left(C_{ij}^{DE} \right)_{st}
    C^{EF}_{st} 
  = \sum_{s,t=1}^e \left( E_p -G^{DE} \right)_{st} C^{EF}_{st} , 
\]
and then positive semidefiniteness of $G^{EF}$ and $G^{DE}$ yield
\[
  F_p - \sum_{i,j=1}^d d^p_{ij} C^{DF}_{ij} 
  = G^{EF} + \sum_{s,t=1}^e G_{st}^{DE} C_{st}^{EF} \succeq 0 \, .
\]
The non-relaxed version~\eqref{eq:inclusion} as well as the relaxed
version~\eqref{eq:inclusion_relaxed} follow by choosing $G^{EF}$ and $G^{DE}$ 
in~\eqref{eq:assump2} to be zero matrices. 
\end{proof}

We can now establish the proof of Theorem~\ref{thm:spec_polytope},
which also completes the proof of Theorem~\ref{thm:summaryexactness}.

\begin{proof} (of Theorem~\ref{thm:spec_polytope}.)
Every (monic) linear pencil $B(x)$ in normal form~\eqref{eq:polytope} can be
stated as a direct sum
\[
  B(x) = \bigoplus_{q=1}^{l} b^q(x) = \bigoplus_{q=1}^{l} (\mathds{1}_l +Bx)_q .
\]
Therefore, Proposition~\ref{prop:direct_sum} implies that
system~\eqref{eq:inclusion} is feasible if and only if the system
\begin{equation*}
  C^q = (C_{ij})_{i,j=1}^k\succeq 0 ,\quad 1 
      = \sum_{i=1}^{k} C_{ii}^q ,\quad
      \forall p=1,\ldots,n:\ b^q_p = \sum_{i,j=1}^k a_{ij}^p C_{ij}^q 
\end{equation*}
is feasible for all $q=1,\ldots,l$. Note that $C^q$ is in $\mathcal{S}_k$.
Hence, the system has the form
\begin{equation}
  C^q = (C_{ij})_{i,j=1}^k\succeq 0, \quad 
    1 = \left\langle I_k, C^q \right\rangle, \quad 
    \forall p=1,\ldots,n:\ b^q_p = \left\langle A_p, C^q \right\rangle .
\label{eq:proof_spec_polytope}
\end{equation}
In the following, we show the existence of a solution by duality theory of
semidefinite programming and transitivity of~\eqref{eq:inclusion}, see 
Theorem~\ref{thm:Helton_transitive}.

Let $b_1^q, \ldots, b_n^q$ be the coefficients of the linear form 
$b^q(x) = (\mathds{1}_l + Bx)_q$. 
Since $B(x)$ is in normal form~\eqref{eq:polytope}, the vector 
$b^q := (b_1^q, \ldots, b_n^q)$ is an inner normal vector to the hyperplane
$b^q(x) = 0$. Consider the semidefinite program
\begin{align}
  r_q := \max &\ \langle -b^q, x\rangle \tag{$P_q$} \\
  \textnormal{s.t.} &\ A(x)\succeq 0 \notag
\end{align}
for all $q=1,\ldots,l$. By assumption, $(P_q)$ is strictly feasible and the
optimal value is finite. Hence (see, e.g., \cite[Thm. 2.2]{de-klerk-book}), 
the dual problem
\begin{align}
  \min &\ \langle I_k , Y^q \rangle \notag \\
  \textnormal{s.t.} &\ \left\langle A_p ,Y^q \right\rangle 
    = b^q_p \quad \forall p=1,\ldots,n \tag{$D_q$}, \\
  &\ \ Y^q \succeq 0 \notag
\end{align}
has the same optimal value and attains it. 
(Note that by duality $\langle -A_p,Y^q \rangle = - b^q_p$.) We can scale the
primal and dual problems simultaneously by dividing by $r_q$ and get
\begin{align}
  1 = \min & \ \langle I_k , \tilde{Y}^q \rangle \notag \\
  \textnormal{s.t.} &\ \langle A_p ,\tilde{Y}^q \rangle 
  = \frac{b^q_p}{r_q}\ \quad \forall p=1,\ldots,n, \tag{$\tilde{D}_q$} \\
  &\ \ \tilde{Y}^q \succeq 0 \notag ,
\end{align}
in which $\tilde{Y}^q := \frac{Y_q}{r_q}$.

Since in the dual $(D_q)$ the optimal value is attained, in $(\tilde{D}_q)$ it
is as well, i.e., for all $q=1,\ldots,l$ there exists a $k\times k$-matrix $C^q$ 
such that
\[
  C^q\succeq 0 ,\quad 
  1 = \left\langle I_k , C^q \right\rangle , \quad
  \frac{b^q_p}{r_q} = \left\langle A_p , C^q \right\rangle .
\]
As mentioned before~\eqref{eq:proof_spec_polytope}, the matrices $C^q$ certify 
the inclusion $S_A\subseteq S_{B'}$, where $B'(x)$ is defined as the scaled 
monic linear pencil 
\[
  B'(x) = \bigoplus_{q=1}^{l} \left( 1 +\sum \frac{b^q_p}{r_q} x_p \right) .
\]
Now we have to distinguish between the two cases in the statement of the 
theorem.
 
First consider the case where $S_B$ is a polytope. 
Since $B(x)$ is in normal form, we have
$\max_{x\in S_B} \langle -b^q, x\rangle = 1$.
Further, since $S_A \subseteq S_B$, the definition of $r_q$ implies
$ r_q \leq 1$ and hence, $S_{B'} \subseteq S_{B}$. 
By transitivity and by exactness of the criterion for polytopes, see
Theorem~\ref{thm:Helton_transitive} and
Lemma~\ref{lem:polytope_inclusion_neccessary},
respectively, there is a solution of system~\eqref{eq:inclusion} that certifies
$S_A\subseteq S_B$.

To prove the unbounded case in the theorem, we construct a solution
to~\eqref{eq:inclusion} for the inclusion 
$S_{\widehat{A}}\subseteq S_{\widehat{B'}}$, where
$\widehat{B'}(x) =  1 \oplus B'(x)$ denotes the extended normal
form~\eqref{eq:extended_spec} of the polyhedron $S_{B'}$. Then the claim follows 
by Lemma~\ref{lem:polytope_inclusion_neccessary} and
Theorem~\ref{thm:Helton_transitive}, as above, since
$S_{\widehat{B'}} \subseteq S_{B'}$ is certified by \eqref{eq:inclusion}.

First note that $S_{\widehat{A}} \subseteq S_B$ is equivalent to 
$S_A \subseteq S_B$. 
Denote by $C'$ the matrix that certifies the inclusion $S_A\subseteq S_{B'}$.
Then the symmetric $(k+1)(l+1)\times(k+1)(l+1)$-matrix
\[
  \widehat{C} := E_{11} \oplus 
    \left[\begin{array}{cc}	0 & 0 \\
				0 & C'_{ij}\end{array}\right]_{i,j=1}^k ,
\]
where $E_{11}$ and the blocks \begin{footnotesize} 
$\Big[\begin{array}{cc}	0 & 0 \\
  0 & C'_{ij}\end{array}\Big]_{i,j=1}^k$
\end{footnotesize}
are of size $(l+1) \times (l+1)$,
certifies the inclusion $S_{\widehat{A}}\subseteq S_{\widehat{B'}}$. Indeed,
adding
zero-columns and zero-rows simultaneously preserves positive semidefiniteness
and, clearly, the sum of the diagonal blocks of $\widehat{C}$ is the identity
matrix
$I_{l+1}$. 
Since in every $\widehat{A}_p$ the first column and the first row are the zero
vector,
we get
\[ 
  \sum_{i,j=0}^{k} \widehat{a}^{\, p}_{ij}\, \widehat{C}_{ij}
  = 0 \cdot E_{11}
  + \begin{bmatrix}
    0 & 0 \\
    0 & \sum_{i,j=1}^{k} a^p_{ij} C'_{ij}
  \end{bmatrix}
  = \widehat{B'}_p\, ,
 \]
where $\widehat{a}^{\, p}_{ij}$ is the $(i,j)$-th entry of $\widehat{A}_p$.

Feasibility of the relaxed criteria is again implied by the feasibility of
\eqref{eq:inclusion}.

\end{proof}

\section{Containment of scaled spectrahedra and inexact cases}
\label{sec:scaled}

Contrasting the results of Sections~\ref{sec:criterion}
and~\ref{sec:spectrahedron-polytope-inclusion}, we first consider a situation
where the containment criterion fails and the relaxed
version~\eqref{eq:inclusion_positive} is strictly stronger.
In particular, this raises the question whether (for a spectrahedron $S_A$
contained in a spectrahedron $S_B$) the criterion becomes satisfied when scaling
$S_A$ by a suitable factor. 
In Proposition~\ref{prop:scaling}, we answer this question in the affirmative.
We then close the paper by applying this result on optimization versions of the
containment problem.

\subsection{Cases where the criterion fails}
\label{sec:non-exact}

We review an example from~\cite[Example 3.1, 3.4]{Helton2010} which shows that 
the containment criterion is not exact in general. 
We then contrast this phenomenon by showing that for this example there exists 
a scaling factor $r$ for one of the spectrahedra so that the containment 
criterion is satisfied after this scaling. 

Consider the monic linear pencils 
$A(x) = I_3 + x_1 (E_{1,3}+E_{3,1}) + x_2 (E_{2,3}+E_{3,2}) \in \mathcal{S}_3[x]$ 
and 
\[
  B(x) =
\begin{bmatrix}
  1 & \\
  & 1 \\
\end{bmatrix} + x_1 
\begin{bmatrix}
  1 & \\
  & -1 \\
\end{bmatrix} + x_2 
\begin{bmatrix}
  & 1 \\
  1 & \\
\end{bmatrix} .
\]
Clearly, both define the unit disc, that is $S_A = S_B$.

\medskip

\noindent
\begin{claim}
The containment question $S_B \subseteq S_A$ is certified by
criterion~\eqref{eq:inclusion}, while the reverse containment question
$S_A \subseteq S_B$ is not certified by the criterion.
\end{claim}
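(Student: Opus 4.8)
The plan is to treat the two containment directions separately, since the whole point of the example is that criterion~\eqref{eq:inclusion} is sensitive to the chosen pencil representation even though $S_A = S_B$ as sets. As a preliminary step I would record that both pencils really describe the unit disc: for $A(x)$ the leading $2\times 2$ block is $I_2$, so a Schur complement reduces $A(x)\succeq 0$ to $1 - x_1^2 - x_2^2 \ge 0$, while for $B(x)$ the determinant condition $(1+x_1)(1-x_1)-x_2^2 \ge 0$ gives the same region. Hence both inclusions hold set-theoretically and only the representation-dependent criterion is at stake.

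For the direction $S_B \subseteq S_A$ (so $B$ is the inner and $A$ the outer pencil, $k=2$, $l=3$) the Choi matrix is $C=(C_{ij})_{i,j=1}^{2}$ with $3\times 3$ blocks. The key observation is that $B_0,B_1,B_2$ already span $\mathcal{S}_2$: comparing coefficients in~\eqref{eq:inclusion} forces the diagonal blocks $C_{11}=\tfrac12(A_0+A_1)$, $C_{22}=\tfrac12(A_0-A_1)$ and pins down the symmetric part of $C_{12}$ to $\tfrac12 A_2$, leaving only an antisymmetric matrix $K$ (with $C_{12}=\tfrac12 A_2 + K$) free. I would determine $K$ from the requirement that the range of $C_{12}$ lie in the range of $C_{11}$ and the range of $C_{12}^{T}$ in the range of $C_{22}$ (a genuine restriction, since both diagonal blocks are rank-deficient); this produces an explicit $C_{12}$, after which I would certify $C\succeq 0$ by writing the quadratic form as a sum of two squares, of the shape $\tfrac12(u_1+u_3+v_2)^2 + \tfrac12(u_2-v_1+v_3)^2$. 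This shows \eqref{eq:inclusion} certifies $S_B\subseteq S_A$.

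For the reverse direction $S_A\subseteq S_B$ (now $A$ inner, $B$ outer, $k=3$, $l=2$) the Choi matrix $C=(C_{ij})_{i,j=1}^{3}$ is a $6\times 6$ positive semidefinite matrix with $2\times 2$ blocks, subject to $C_{11}+C_{22}+C_{33}=I_2$, $C_{13}+C_{13}^{T}=\diag(1,-1)$ and $C_{23}+C_{23}^{T}=\begin{bmatrix}0&1\\1&0\end{bmatrix}$. I would argue infeasibility as follows. The second constraint forces the two diagonal entries of $C_{13}$ to equal $\tfrac12$ and $-\tfrac12$. These are exactly the off-diagonal entries of the two principal $2\times 2$ submatrices of $C$ that pair the $s$-th coordinate of block $1$ with the $s$-th coordinate of block $3$; positive semidefiniteness of those minors gives $(C_{11})_{ss}(C_{33})_{ss}\ge \tfrac14$ for $s=1,2$. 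Combined with the trace constraint, which using $C_{22}\succeq 0$ yields $(C_{11})_{ss}+(C_{33})_{ss}\le 1$, and the AM-GM inequality, this squeezes both diagonal entries of $C_{22}$ to zero, whence $C_{22}=0$. But a positive semidefinite matrix with a vanishing diagonal block has the whole corresponding block-row zero, so $C_{23}=0$, contradicting the third constraint.

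The main obstacle is the infeasibility argument in the last paragraph: one must select precisely the right $2\times 2$ minors and notice that the value $-\tfrac12$ forced onto the diagonal of $C_{13}$, together with the unit-trace budget that $C_{11}$ and $C_{33}$ must share with the nonnegative block $C_{22}$, leaves no slack whatsoever. The positive direction is comparatively routine once one realizes that the antisymmetric freedom in $C_{12}$ is exactly what is needed to turn the indefinite naive guess $C_{12}=\tfrac12 A_2$ into a sum-of-squares-certifiable positive semidefinite matrix.
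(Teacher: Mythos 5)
Your proposal is correct in both directions, and it reaches the same conclusions as the paper by routes that differ in the details. For $S_B \subseteq S_A$, the paper parametrizes the free antisymmetric part of $C_{12}$ by three scalars $c_1,c_2,c_3$ and decides positive semidefiniteness via the Schur complement with respect to the positive definite top-left $2\times 2$ corner, finding feasibility exactly at $c_1=c_3=\tfrac12$, $c_2=0$; your range-inclusion conditions (range of $C_{12}$ inside the range of the rank-two block $C_{11}=\tfrac12(A_0+A_1)$, range of $C_{12}^T$ inside that of $C_{22}=\tfrac12(A_0-A_1)$) pin down exactly the same unique matrix, and your identity $\tfrac12(u_1+u_3+v_2)^2+\tfrac12(u_2-v_1+v_3)^2$ is precisely the quadratic form of that $C$, so the two derivations produce the identical certificate. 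For the infeasibility of $S_A \subseteq S_B$, both arguments start from the forced entries $(C_{13})_{11}=\tfrac12$, $(C_{13})_{22}=-\tfrac12$ and $2\times 2$ principal minors pairing blocks $1$ and $3$, but then diverge: the paper forces only $(C_{11})_{11}=\tfrac12$ and $(C_{22})_{11}=0$, zeroes the corresponding row (so $(C_{23})_{12}=0$ and hence $(C_{23})_{21}=1$), and derives the contradiction from a further minor forcing $(C_{22})_{22}\ge 2$, which makes a diagonal entry of $C_{33}$ negative; you instead run the minor--trace--AM-GM argument symmetrically at both coordinates $s=1,2$, conclude $C_{22}=0$ outright, and contradict the equality constraint $C_{23}+C_{23}^T=\begin{bmatrix}0&1\\1&0\end{bmatrix}$ directly. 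Your variant is somewhat more symmetric and dispenses with the second, asymmetric minor computation, at the cost of invoking AM-GM; the paper's version needs forcing at only one coordinate but an extra step to close the contradiction. Both are complete and correct proofs of the claim.
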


\medskip

First, we look into the inclusion $S_B \subseteq S_A$ (where the roles of $A$
and $B$ in \eqref{eq:inclusion} have to be interchanged).
Criterion~\eqref{eq:inclusion} is satisfied if and only if there exist 
$c_1, c_2, c_3 \in \R$ such that
\[
  C = \left[
\begin{array}{ccc|ccc}
  \frac{1}{2} & 0 & \frac{1}{2} & 0 & c_1 & c_2 \\
  0 & \frac{1}{2} & 0 & -c_1 & 0 & c_3 \\
  \frac{1}{2} & 0 & \frac{1}{2} & -c_2 & 1-c_3 & 0 \\[0.5ex] \hline
          &      &             &      &       &   \\ [-2ex]
  0 & -c_1 & -c_2 & \frac{1}{2} & 0 & -\frac{1}{2} \\
  c_1 & 0 & 1-c_3 & 0 & \frac{1}{2} & 0 \\
  c_2 & c_3 & 0 & -\frac{1}{2} & 0 & \frac{1}{2} \\
\end{array} 
\right] \in \R^{6\times 6}
\]
is positive semidefinite.
Since the $2\times2$-block in the top left corner is positive definite, the
matrix $C$ is positive semidefinite if and only if the Schur complement with
respect to this block is positive semidefinite. 
One can easily check that this is the case if and only if $c_1=c_3=\frac{1}{2}$
and $c_2=0$.

Conversely, $S_A \subseteq S_B$ is certified by~\eqref{eq:inclusion} if and
only if there exist $c_1, \ldots, c_{12} \in \R$ such that
\[
  C =
\left[ 
\begin{array}{cc|cc|cc}
  c_1 & c_2 & c_9 & c_{10} & \frac{1}{2} & c_7 \\
  c_2 & c_3 & c_{11} & c_{12} & -c_7 & -\frac{1}{2} \\ [0.5ex] \hline
  c_9 & c_{11} & c_{4} & c_{5} & 0 & c_8 \\
  c_{10} & c_{12} & c_5 & c_6 & 1-c_8 & 0 \\ \hline
         &       &     &     &       &   \\ [-2ex]
  \frac{1}{2} & -c_7 & 0 & 1-c_8 & 1-c_1-c_4 & -c_2-c_5 \\
  c_7 & -\frac{1}{2} & c_8 & 0 & -c_2-c_5 & 1-c_3-c_6 \\
\end{array}
\right] \in\R^{6\times 6}
\]
is positive semidefinite. 
We show the infeasibility of the system~\eqref{eq:inclusion}.

Assume that $C$ is positive semidefinite. Then all principal minors are
non-negative. Consider the principal minor
\[
  \begin{vmatrix}
    c_1 & \frac{1}{2} \\
    \frac{1}{2} & 1-c_1-c_4 
  \end{vmatrix}
  = c_1(1 - c_1 - c_4) -\frac{1}{4}
  = \left[ c_1(1 - c_1) - \frac{1}{4} \right] - c_1c_4 .
\]
Since the expression in the brackets as well as the second term are always less
than or equal to zero the minor is non-positive.
Therefore, $c_1(1-c_1)-\frac{1}{4} = 0$ and $c_1 c_4 = 0$, or equivalently,
$c_1=\frac{1}{2}$ and $c_4=0$.

Recall that whenever a diagonal element of a positive semidefinite matrix is
zero, the corresponding row is the zero vector, that is 
$c_5 = c_8 = c_9 = c_{11} = 0$. 
Now, we get a contradiction since the principal minor
\[
  \begin{vmatrix} c_6 & 1 - c_8 \\ 1 - c_8 & 1 - c_1 - c_4 \end{vmatrix}
  = \begin{vmatrix} c_6 & 1 \\ 1 & \frac{1}{2} \end{vmatrix}
  = \frac{1}{2}\, c_6 - 1
\]
implies that $c_6\geq2$ and therefore  $ 1 - c_3 - c_6 \leq -1 - c_3 < 0 $ or
$ c_3 < -1 $. This proves the claim.
\newline

Now, generalizing $A(x)$, let $A^r(x)$ be the linear pencil of the ball
with radius $(1>)r>0$ in normal form.
With regard to the containment question $S_{A^r} = r S_A \subseteq S_B$, we show
the feasibility of system~\eqref{eq:inclusion} for $r$ sufficiently small.
Consider the matrix
\[
  C = \left[ 
\begin{array}{cc|cc|cc}
  c & 0 & 0 & c & \frac{r}{2} & 0 \\
  0 & c & -c & 0 & 0 & -\frac{r}{2} \\ [0.5ex] \hline
  0 & -c & c & 0 & 0 & \frac{r}{2} \\
  c & 0 & 0 & c & \frac{r}{2} & 0 \\ [0.5ex] \hline
  \frac{r}{2} & 0 & 0 & \frac{r}{2} & 1-2 c & 0 \\
  0 & -\frac{r}{2} & \frac{r}{2} & 0 & 0 & 1-2 c \\
\end{array}
\right] \in\R^{6\times 6}.
\]
Obviously the equality constraints in \eqref{eq:inclusion} are fulfilled.

As above, if $c=0$ or $1-2c=0$, then $r=0$. Therefore, $0 < c < \frac{1}{2}$
and the $2\times2$-block in the top left corner $C_{11}$ is positive
definite. Thus the matrix $C$ is positive semidefinite if and only if the Schur
complement with respect to $C_{11}$ is positive semidefinite. 
This is the case if and only if
\[
  1-2c-\frac{r^2}{4c} \geq 0
  \ \Leftrightarrow\ f(c) := 8 c^2 - 4 c + r^2 \leq 0 .
\]
Assume $r > \frac{1}{2}\sqrt{2}$. 
Then $f(c)>0$ for all $c$ since $f$ has no real roots and the constant term
$f(0) = r^2$ is positive.
Otherwise, $ f(\frac{1}{4}) = -\frac{1}{2} + r^2 \leq 0 $.
Hence, system~\eqref{eq:inclusion} is feasible for 
$0 < r \leq \frac{1}{2}\sqrt{2}$.

The problem of maximizing $r$ such that the system~\eqref{eq:inclusion} is
feasible can be formulated as a semidefinite program. A numerical computation
yields an optimal value of $0.707 \approx \frac{1}{2}\sqrt{2}$.

Note that we are in the situation of Corollary~\ref{cor:contain_symmetry}.
For the relaxed version~\eqref{eq:inclusion_positive}, a numerical computation
gives the optimal value of $0.950 \approx \frac{19}{20}$.
In particular, this shows that the relaxed
criterion~\eqref{eq:inclusion_positive} can be satisfied in cases where the
non-relaxed criterion~\eqref{eq:inclusion} does not certify an inclusion.

It is an open research question to establish a quantitative relationship
comparing criterion~\eqref{eq:inclusion_positive} to \eqref{eq:inclusion} in the
general case.

\subsection{Containment of scaled spectrahedra}

For a monic linear pencil $A(x) \in \mathcal{S}_k[x]$ 
and a constant $\nu>0$ define
\begin{equation}
  A^{\nu}(x) := A\left(\frac{x}{\nu}\right) 
  = I_k + \frac{1}{\nu} \sum_{p=1}^{n} x_p\, A_p ,
\label{eq:scaled_pencil}
\end{equation}
the {\it $\nu$-scaled (monic linear) pencil}. 
Similarly, we denote by $\nu S_{A} := \{ x\in\R^n \, : \, A^{\nu}(x)\succeq0
\} $ the corresponding {\it $\nu$-scaled spectrahedron}.

Generalizing the observation from Section~\ref{sec:non-exact}, we show that for
two spectrahedra $S_A$ and $S_B$, containing the origin in their interior, there
always exists some scaling factor $\nu$ such that the
criteria~\eqref{eq:inclusion} and~\eqref{eq:inclusion_relaxed} certify the
inclusion $\nu S_A\subseteq S_B$. 
This extends the following result of Ben-Tal and Nemirovski, who treated
containment of a cube in a spectrahedron (in which case they can even give a
bound on the scaling factor).

\begin{prop}
\cite[Thm. 2.1]{ben-tal-nemirovski-2002}
Let $S_A$ be the cube~\eqref{eq:r-cube} with edge length $r>0$ and consider
a monic linear pencil $B(x)$. Let $\mu = \max_{p=1,\ldots,n} \rank B_p$. 
If $S_A \subseteq S_B$, then system~\eqref{eq:inclusion} is feasible for the
$\nu(\mu)$-scaled cube ${\nu(\mu)} S_{A}$, where $\nu(\mu)$ is given by
\[
  \nu(\mu) = \min_{y\in\R^{\mu}, \|y\|_1 = 1} 
  \left\{ 
  \int_{\R^{\mu}} \left| \sum_{i=1}^{\mu} y_i u_i^2 \right| 
  \left( \frac{1}{2\mu} \right)^{\frac{\mu}{2}} 
  \exp\left( -\frac{u^{T} u}{2} \right) du
  \right\} .
\]
  \item For all $\mu$ the bound $\nu(\mu) \geq
\frac{2}{\pi\sqrt{\mu}}$
holds.
\label{prop:scaled_cube}
\end{prop}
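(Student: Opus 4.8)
The plan is to unwind criterion~\eqref{eq:inclusion} for the concrete cube pencil~\eqref{eq:r-cube} into the classical matrix-cube SDP and then to reduce the whole statement to a single eigenvalue inequality. Applying \eqref{eq:inclusion} to the $\nu$-scaled cube pencil $A^\nu$ (see~\eqref{eq:scaled_pencil}) against a monic $B(x)$, and using $A^\nu_0=I_{2n}$ together with $A^\nu_p=\frac{1}{\nu r}(E_{pp}-E_{n+p,n+p})$, the block conditions collapse to: a positive semidefinite Choi matrix $C$ with $\sum_i C_{ii}=I_l$ and $C_{pp}-C_{n+p,n+p}=\nu r\,B_p$ for $p=1,\ldots,n$. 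Eliminating the paired diagonal blocks $C_{pp},C_{n+p,n+p}\succeq0$ and setting $X_p:=\frac{1}{\nu r}(C_{pp}+C_{n+p,n+p})$, I would show that feasibility is equivalent (up to an easy slack adjustment) to the existence of symmetric $X_p$ with $X_p\pm B_p\succeq0$ and $\sum_p X_p\preceq\frac{1}{\nu r}I_l$. Since $|B_p|:=(B_p^2)^{1/2}$ is the Loewner-least matrix dominating $\pm B_p$, the optimal choice is $X_p=|B_p|$, so the criterion is feasible for $\nu S_A$ exactly when $\lambda_{\max}\!\big(\sum_p|B_p|\big)\le\frac{1}{\nu r}$.

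On the other hand, true containment $S_A\subseteq S_B$ means $I_l+\sum_p x_pB_p\succeq0$ for all $x\in[-r,r]^n$; by the central symmetry of the cube this is $\max_{x\in[-1,1]^n}\lambda_{\max}\!\big(\sum_p x_pB_p\big)\le\frac{1}{r}$. Thus it remains to prove the rank-sensitive eigenvalue inequality
\[
\lambda_{\max}\Big(\sum_{p}|B_p|\Big)\ \le\ \frac{1}{\nu(\mu)}\max_{x\in[-1,1]^n}\lambda_{\max}\Big(\sum_p x_pB_p\Big),\qquad \mu=\max_p\rank B_p,
\]
since combining it with the two characterizations above yields feasibility of~\eqref{eq:inclusion} for $\nu=\nu(\mu)$. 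The direction ``$\ge$'' is elementary; the content is the stated ``$\le$''.

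For the key inequality I would argue probabilistically. Let $\Lambda$ denote the right-hand maximum and normalize so that $\lambda_{\max}\!\big(\sum_p|B_p|\big)=1$, with top eigenvector $w$; the goal becomes $\Lambda\ge\nu(\mu)$. Drawing a random unit test vector $u$ and rounding $x_p:=\operatorname{sign}(u^TB_pu)\in\{-1,1\}$ gives, for every realization, $\Lambda\ge u^T\big(\sum_p x_pB_p\big)u=\sum_p|u^TB_pu|$, whence $\Lambda\ge\E_u\sum_p|u^TB_pu|$; it then suffices to find a distribution of $u$ with $\E_u\sum_p|u^TB_pu|\ge\nu(\mu)$. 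Here the rank hypothesis enters: restricting a Gaussian to the $\le\mu$-dimensional range of $B_p$ and writing $u^TB_pu=\sum_{i=1}^\mu\lambda_{p,i}\eta_i^2$ with $\eta\sim N(0,I_\mu)$ shows that the per-matrix loss is governed precisely by $\min_{\|y\|_1=1}\E\,|\sum_i y_i\eta_i^2|=\nu(\mu)$, the $\mu$-dimensional Gaussian integral in the statement. The main obstacle is exactly the design of this rounding: a single global $u$ must simultaneously serve matrices with different $\le\mu$-dimensional ranges, and a naive uniform choice loses a factor depending on the ambient size $l$ rather than on $\mu$; the crux of the Ben-Tal and Nemirovski argument is to choose the averaging so that each term pays only the scalar factor $\nu(\mu)$ while the signed sum stays bounded by $\Lambda$ (equivalently, so that the associated map is completely positive). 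I would model this step on their construction and first verify the bookkeeping in the two-dimensional model case, where $\E|\cos|=2/\pi$ reproduces $\nu(2)$.

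Finally, the estimate $\nu(\mu)\ge\frac{2}{\pi\sqrt{\mu}}$ is a self-contained one-dimensional calculation. Writing $X=\sum_{i=1}^\mu y_iu_i^2$ with $\|y\|_1=1$, I would lower-bound its mean absolute value by its $L_2$-size via a sharp Khintchine/symmetrization inequality for Gaussian quadratic forms (the worst case forcing $X$ to be balanced, i.e.\ nearly mean-zero), obtaining $\E|X|\ge c\,\|y\|_2$ with the sharp constant, and then apply Cauchy--Schwarz in the form $\|y\|_2\ge\|y\|_1/\sqrt{\mu}=1/\sqrt{\mu}$. Tracking the sharp constant (two appearances of $\E|g|=\sqrt{2/\pi}$ for a standard Gaussian $g$, whose product is $2/\pi$) yields $\nu(\mu)\ge\frac{2}{\pi\sqrt{\mu}}$. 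This part is routine relative to the randomized-rounding step, which is where the real work lies.
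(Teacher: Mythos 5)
The paper never proves this proposition---it is imported verbatim from Ben-Tal and Nemirovski (their Theorem 2.1)---so your attempt must be measured against their argument rather than anything in the text. Your opening reduction is correct and worth having: for the cube pencil~\eqref{eq:r-cube}, criterion~\eqref{eq:inclusion} (restricted, without loss of generality, to block-diagonal Choi matrices) is indeed equivalent, after the slack adjustment you describe, to the Ben-Tal--Nemirovski system $X_p\succeq\pm B_p$ for $p=1,\ldots,n$ and $\sum_{p}X_p\preceq\frac{1}{\nu r}I_l$.

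The proof breaks at the next step. The claim that $|B_p|$ is the Loewner-least matrix dominating $\pm B_p$ is false: for $B=\diag(1,-1)$ the matrix $X=\left[\begin{smallmatrix}2&\sqrt3\\ \sqrt3&2\end{smallmatrix}\right]$ satisfies $X\succeq\pm B$ but $X\not\succeq I_2=|B|$ (the Loewner order is not a lattice, and the feasible set $\{X:X\succeq\pm B\}$ has no least element). Consequently your ``exactly when'' characterization holds only in the ``if'' direction, and---fatally---the key eigenvalue inequality to which you reduce the whole statement is false. Take $B_p=E_{p,n+1}+E_{n+1,p}$ for $p=1,\ldots,n$, i.e.\ the normal form~\eqref{eq:ellipsoid} of the unit ball: each $B_p$ has rank $2$, so $\mu=2$, and $|B_p|=E_{pp}+E_{n+1,n+1}$, whence $\lambda_{\max}\bigl(\sum_p|B_p|\bigr)=n$, while $\sum_p x_pB_p$ has eigenvalues $\pm\|x\|_2$, so $\max_{\|x\|_\infty\le1}\lambda_{\max}\bigl(\sum_p x_pB_p\bigr)=\sqrt n$. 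The ratio between the two sides is $\sqrt{n}$, which is unbounded, so your inequality cannot hold with any constant $1/\nu(2)$; with the actual value $\nu(2)=2/\pi$ it already fails for $n=3$. Note that the criterion itself is not to blame: $X_p=\sqrt n\,E_{pp}+\frac{1}{\sqrt n}E_{n+1,n+1}$ is feasible ($X_p\pm B_p\succeq0$) and gives $\sum_pX_p=\sqrt n\,I_{n+1}$, certifying the exact radius $1/\sqrt n$; it is the restriction to $X_p=|B_p|$ that loses the unbounded factor. This is precisely why Ben-Tal and Nemirovski never fix the $X_p$: their proof assumes the system is infeasible at a given scale, extracts via SDP duality a witness $\Lambda\succeq0$ with $\operatorname{tr}\Lambda=1$ and $\sum_p\|\Lambda^{1/2}B_p\Lambda^{1/2}\|_1$ large, and then runs the Gaussian rounding with covariance $\Lambda$, using the key lemma $\E\,|\eta^TV\eta|\ge\nu(\rank V)\,\|V\|_1$ applied to $V=\Lambda^{1/2}B_p\Lambda^{1/2}$, whose rank is at most $\mu$. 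That dual witness is exactly the ``right averaging measure'' whose construction you identified as the main obstacle; but aimed at your inequality no rounding scheme can succeed, because the inequality itself is false. (Your concluding sketch for $\nu(\mu)\ge\frac{2}{\pi\sqrt\mu}$ is plausible but moot until the main reduction is repaired.)
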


A quantitative result as presented in the last Proposition is not known for the
general case. However, combining Proposition~\ref{prop:scaled_cube} with our
results from Sections~\ref{sec:criterion} 
and~\ref{sec:spectrahedron-polytope-inclusion} we get that for spectrahedra with 
non-empty interior, there is always a scaling factor such that 
system~\eqref{eq:inclusion} and thus also system~\eqref{eq:inclusion_relaxed} hold.

\begin{prop} \label{prop:scaling}
Let $A(x)$ and $B(x)$ be monic linear pencils such that $S_A$ is bounded.
Then there exists a constant $\nu>0$ such that for the scaled spectrahedron 
$\nu S_{A}$ the inclusion $\nu S_A \subseteq S_B$ is certified by
the systems~\eqref{eq:inclusion} and~\eqref{eq:inclusion_relaxed}.
\end{prop}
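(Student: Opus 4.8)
The plan is to insert a cube as an intermediate body between the two spectrahedra, certify the two resulting inclusions by separate tools, and glue the certificates together using transitivity of the criterion (Theorem~\ref{thm:Helton_transitive}). This detour is forced: the inclusion of a polytope in a general spectrahedron is not certifiable by~\eqref{eq:inclusion} in general, and the inclusion of a spectrahedron in a spectrahedron is exactly the hard problem. The only available handle for treating $S_B$ as the outer set is the quantitative scaling result of Ben-Tal and Nemirovski (Proposition~\ref{prop:scaled_cube}), which certifies a sufficiently shrunken cube inside $S_B$.

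First, since $B(x)$ is monic, $S_B$ contains the origin in its interior, so I would fix $r>0$ small enough that the cube $Q=[-r,r]^n$, written in its normal form $G(x)$ as in~\eqref{eq:r-cube}, satisfies $Q\subseteq S_B$. With $\mu=\max_{p}\rank B_p$, Proposition~\ref{prop:scaled_cube} then guarantees that criterion~\eqref{eq:inclusion} certifies the inclusion $Q':=\nu(\mu)\,Q\subseteq S_B$, where $\nu(\mu)\geq \frac{2}{\pi\sqrt{\mu}}>0$ and the inner pencil is the scaled cube pencil $G^{\nu(\mu)}$.

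Second, since $S_A$ is bounded and $Q'$ is a cube with the origin in its interior, there is some $\nu>0$ small enough that $\nu S_A\subseteq Q'$. Here the inner set is a spectrahedron, with the monic pencil $A^{\nu}$ from~\eqref{eq:scaled_pencil}, and the outer set $Q'$ is a bounded polytope whose pencil $G^{\nu(\mu)}$ is diagonal, hence in the normal form~\eqref{eq:polytope}. The exactness of the criterion in the spectrahedron-in-polytope case (Theorem~\ref{thm:spec_polytope}, with $A(x)$ in place of $\widehat{A}(x)$ since $Q'$ is bounded) therefore yields that $\nu S_A\subseteq Q'$ is certified by~\eqref{eq:inclusion}.

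Finally, the intermediate pencil $G^{\nu(\mu)}$ occurs as the outer pencil of the first certified inclusion and as the inner pencil of the second, so Theorem~\ref{thm:Helton_transitive} combines the two certificates into one for $\nu S_A\subseteq S_B$ via~\eqref{eq:inclusion}; since feasibility of~\eqref{eq:inclusion} implies feasibility of~\eqref{eq:inclusion_relaxed}, both systems certify the inclusion. The individual steps are short given the machinery already assembled, so I expect the main obstacle to be conceptual rather than computational: recognizing that one must route through a cube and invoke Ben-Tal and Nemirovski, together with the bookkeeping needed to keep the pencil representation of the intermediate cube \emph{identical} in both inclusions, which is precisely what makes transitivity applicable.
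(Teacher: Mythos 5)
Your proposal is correct and follows essentially the same route as the paper's proof: an intermediate cube, Proposition~\ref{prop:scaled_cube} (Ben-Tal--Nemirovski) for the cube-in-spectrahedron certificate, Theorem~\ref{thm:spec_polytope} for the spectrahedron-in-cube certificate, and Theorem~\ref{thm:Helton_transitive} to glue them. The only (immaterial) difference is bookkeeping: the paper starts from the minimal cube $S_D\supseteq S_A$ and scales the pair $(S_A,S_D)$ into $S_B$, whereas you fix a small cube inside $S_B$ first and then shrink $S_A$ into it.
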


We provide a proof based on the framework established in the previous
sections.
Alternatively it can be deduced from statements about the matricial relaxation
of criterion~\eqref{eq:inclusion} given in the work by Helton and
McCullough~\cite{Helton2004}, see also~\cite{Helton2010}.
Criterion~\eqref{eq:inclusion} is satisfied for linear pencils $A^{\nu}(x)$ and
$B(x)$ if and only if the matricial version of $\nu S_A$ is contained in the
matricial version of $S_B$.

\begin{proof}
Denote by $S_D$ the cube, defined by the monic linear pencil~\eqref{eq:r-cube}, 
with the minimal edge length such that $S_A$ is contained in it, which can be
computed by a semidefinite program, see Theorem~\ref{thm:spec_polytope}. 
Since $B(x)$ is monic, there is an open subset around the origin contained in
$S_B$. Thus there is a scaling factor $\nu_1>0$ so that 
$\nu_1 S_A \subseteq \nu_1S_D \subseteq S_B$. 

By Proposition~\ref{prop:scaled_cube}, there exists a constant $\nu_2>0$ such
that for the problem $\nu_2 \nu_1 S_{D} \subseteq S_B$ 
system~\eqref{eq:inclusion} has a solution $C^{D^{\nu}B }$ with $\nu = \nu_1 \nu_2$.
By Theorem~\ref{thm:spec_polytope}, there is a matrix $C^{A^{\nu} D^{\nu}}$
which solves \eqref{eq:inclusion} for the problem $\nu S_{A}$ in $\nu S_{D}$.

Finally, Theorem~\ref{thm:Helton_transitive} implies the feasibility of
system~\eqref{eq:inclusion} with respect to $\nu S_{A}$ and $S_B$ by the matrix
$C^{A^{\nu} B}$, as defined there.
\end{proof}

In the proof of Proposition~\ref{prop:scaling}, we scaled the spectrahedron
$S_A$ by a certain factor $\nu$.
Since $\nu S_A \subseteq S_B$ is equivalent to $S_A \subseteq \frac{1}{\nu} S_B$,
the criterion~\eqref{eq:inclusion} remains a positive semidefinite condition 
even in the presence of the factor $\nu$. 
Moreover, we can optimize for $\nu$ such that the criterion remains satisfied.
Proposition~\ref{prop:scaling} implies that for bounded spectrahedra represented
by monic linear pencils the maximization problem for $\nu$ always has a positive 
optimal value. 

This yields a natural framework for the approximation of smallest enclosing 
spectrahedra and largest enclosed spectrahedra. 
In~\cite[Section 4]{Helton2010}, the example of computing a bound for the norm
of the elements of a spectrahedron $S_A$ (represented by a monic linear pencil)
is given. 
This can be achieved by choosing $S_B$ to be the ball centered at the origin,
see~\eqref{eq:ellipsoid}.

As we have seen in Section~\ref{sec:non-exact}, applying 
criterion~\eqref{eq:inclusion_positive} to the problem is stronger than 
specializing criterion~\eqref{eq:inclusion} to it. 
However, for the criterion~\eqref{eq:inclusion}, we obtain a particularly nice 
representation,  it reduces to the semidefinite system

\begin{align}\label{eq:matricial_circumradius}
\begin{split}
  C &= \left( C_{ij} \right)_{i,j=1}^{k} \succeq 0, \\ 
  I_{n+1} &= \sum_{i=1}^{k} C_{ii}, \\ 
  \forall p = 1,\ldots,n,\ \forall\, (s,t)&\in\left\{ 1,\ldots,n+1 \right\}^{2}: \\
  \left( \sum_{i,j=1}^{k} a^p_{ij} C_{ij} \right)_{st} &=
  \begin{cases}
    \frac{1}{r} & \text{if } (s,t)\in\left\{ (p,n+1),\,(n+1,p) \right\}, \\
    0 & \text{else.}
  \end{cases}
\end{split} 
\end{align}

\bigskip
\begin{ack}
We would like to thank the anonymous referees for careful reading, 
detailed comments and additional relevant references.
\end{ack}

\newpage
\bibliography{bibcontainment}
\bibliographystyle{plain}
\end{document}